\def\pd{\mathrm{pd}}
\def\density{\mathrm{d}}
\def\cel{\mathrm{c}}
\def\con{\subseteq}
\def\from{\colon}
\def\cont{\mathfrak{c}}
\def\CL#1{\overline{#1}}
\newcommand\GCH{\ensuremath{\mathsf{GCH}}}
\newcommand\WGCH{\ensuremath{\mathsf{WGCH}}}
\def\integers{\mathbb{Z}}
\def\reals{\mathbb{R}}
\def\arhang{Arhangel'skii}
\def\mathcal#1{\mathscr{#1}}
\newcommand{\cf}{\operatorname{cf}}
\newtheorem{theorem}{Theorem}[section]
\newtheorem{corollary}[theorem]{Corollary}
\newtheorem{lemma}[theorem]{Lemma}
\newtheorem{proposition}[theorem]{Proposition}
\theoremstyle{definition}
\newtheorem{example}[theorem]{Example}
\theoremstyle{remark}
\numberwithin{equation}{section}
\def\range{\mathop{\operator@font range}\nolimits}
\begin{document}

\author{Istvan Juh\'asz}
\address{Alfr\'ed Renyi Institute of Mathematics, Hungarian Academy of Sciences}
\email{juhasz@renyi.hu}
\author{Jan van Mill}
\address{KdV-institute for Mathematics, University of Amsterdam}
\email{j.vanMill@uva.nl}
\author{Lajos Soukup}
\address{Alfr\'ed Renyi Institute of Mathematics, Hungarian Academy of Sciences}
\email{soukup@renyi.hu}
\author{Zolt\'an Szentmikl\'ossy}
\address{E\H{o}tv\H{o}s University of Budapest}
\email{szentmiklossyz@gmail.com}

\title[pd-examples]{Connected and/or topological group pd-examples}

\date{\today}

\keywords{pinning down number, density, connected space, free topological group, free locally convex vector space, superextension}

\subjclass[2000]{03E10, 03E35, 54A25, 54D05, 54H11, 22A05, 46A03}

\begin{abstract}
The pinning down number $\pd(X)$ of a topological space $X$ is the smallest cardinal $\kappa$ such that for every neighborhood assignment $\mathcal{U}$ on $X$
there is a set of size $\kappa$ that meets every member of $\mathcal{U}$. Clearly, $\pd(X) \le \density(X)$ and we call $X$ a pd-example if $\pd(X) < \density(X)$.
We denote by $\mathbf{S}$ the class of all singular cardinals that are not strong limit.
It was proved in ~\cite{JuhaszSoukupSzentmiklossy16} that TFAE:

\begin{enumerate}
\item $\mathbf{S} \ne \emptyset$;
\item there is a 0-dimensional $T_2$ pd-example;
\item there is a $T_2$ pd-example.
\end{enumerate}

The aim of this paper is to produce pd-examples with further interesting topological properties like connectivity or being a topological group
by presenting several constructions that transform given pd-examples into ones with these additional properties.

We show that $\mathbf{S} \ne \emptyset$ is also equivalent to the existence of a connected and locally connected $T_3$ pd-example,
as well as to the existence of an abelian $T_2$ topological group pd-example.

However, $\mathbf{S} \ne \emptyset$ in itself is not sufficient to imply the existence of a connected $T_{3.5}$ pd-example. But if
there is $\mu \in \mathbf{S}$ with $\mu \ge \mathfrak{c}$ then there is an abelian $T_2$ topological group (hence $T_{3.5}$) pd-example
which is also arcwise connected and locally arcwise connected.
Finally, the same assumption $\,\mathbf{S} \setminus \mathfrak{c} \ne \emptyset\,$ even implies that there is
a locally convex topological vector space pd-example.
\end{abstract}

\thanks{The research and preparation on this paper was supported by NKFIH grant no. K113047. It derives from the authors' collaboration at the Renyi Institute in Budapest in
the spring of 2016. The second-listed author is pleased to thank the Hungarian Academy of Sciences for generous support (distinguished scientists program) and the R\'enyi Institute for excellent conditions and generous hospitality.}

\maketitle

\section{Introduction}\label{introduction}

\bigskip

The \emph{pinning down number} $\pd(\mathcal{A})$ of a family of sets $\mathcal{A}$ is defined to be the smallest
cardinality of a set that intersects every non-empty member of $\mathcal{A}$. For a topological space $X$, the \emph{pinning down number of $X$}, abbreviated $\pd(X)$, is defined as follows:
$$
    \pd(X) = \sup\big\{\pd(\mathcal{U}) : \mathcal{U} = \{U_x : x \in X\} \mbox{ is a neighbourhood assignment on }X \big\}.
$$
Clearly, $\cel(X) \le \pd(X) \le \density(X)$. Here $\cel(X)$ and $\density(X)$ denote the cellularity and density of $X$, respectively.

The cardinal function pd$(X)$ has been introduced recently, under different names, by Aurichi and Bella in \cite{AurichiBella15} and independently
by Banakh and Ravsky in \cite{BanakhRavsky16}. The latter showed among other things that if $|X|< \aleph_\omega$, then $\pd(X) = \density(X)$, \cite[Theorem 5.2]{BanakhRavsky16}.

The \emph{Weak Generalized Continuum Hypothesis} (abbreviated \WGCH) is the statement that $2^\kappa$ is a finite successor of $\kappa$ for any cardinal $\kappa$,
i.e. in symbols: $(\forall\,\kappa)(2^\kappa < \kappa^{+\omega})$. Clearly, \WGCH\ is equivalent to the statement that
every singular cardinal is strong limit. Answering some problems raised in \cite{BanakhRavsky16}, the following was proved recently in Juh\'asz, Soukup and Szentmikl\'ossy~\cite[Theorem 1.2]{JuhaszSoukupSzentmiklossy16}:

\begin{theorem}\label{maintheorem}
The following statements are equivalent:
\begin{enumerate}
\item \WGCH;
\item $\density(X) = \pd(X)$ for every $T_2$ space $X$;
\item $\density(X) = \pd(X)$ for every 0-dimensional $T_2$ space $X$.
\end{enumerate}
\end{theorem}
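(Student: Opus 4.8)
My plan is to prove the three-way equivalence as a cycle $(1)\Rightarrow(2)\Rightarrow(3)\Rightarrow(1)$, in which only two implications carry genuine content. The implication $(2)\Rightarrow(3)$ is immediate, since every $0$-dimensional $T_2$ space is $T_2$ and so the class in $(3)$ is a subclass of the one in $(2)$. The remaining two implications split the work into a ZFC inequality (the direction assuming \WGCH) and a construction (the direction producing a counterexample when \WGCH\ fails). Throughout I would use the reformulation $\neg\WGCH\iff\mathbf S\ne\emptyset$, where $\mathbf S$ is the class of singular cardinals that fail to be strong limit, together with the trivial bounds $\cel(X)\le\pd(X)\le\density(X)$.

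The heart of the $(1)\Rightarrow(2)$ direction I would isolate as a single ZFC lemma: if $\pd(X)<\density(X)$, then there is $\mu\in\mathbf S$ with $\cf(\mu)\le\pd(X)<\mu\le\density(X)$ and $\mu\le 2^{\pd(X)}$. Granting this lemma, $(1)\Rightarrow(2)$ is immediate, for \WGCH\ asserts $\mathbf S=\emptyset$ and thereby forbids $\pd(X)<\density(X)$; combined with $\pd(X)\le\density(X)$ this yields equality for every $T_2$ space. To prove the lemma I would set $\kappa=\pd(X)$ and, assuming $\density(X)>\kappa$, build by recursion a strictly increasing continuous chain $\langle D_\xi\rangle$ of non-dense sets of size $\le\kappa$, at each stage choosing a point of the open set $X\setminus\CL{D_\xi}$ and periodically absorbing a witnessing neighbourhood assignment on the remaining open region into a pinning set of size $\le\kappa$. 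The two quantitative outputs I would extract are that the cofinality of the resulting ``density level'' is $\le\kappa$ (the recursion advances by pinning sets of size $\le\kappa$) and that the number of distinct local patterns that must be covered is bounded by $2^\kappa$ (neighbourhoods are pinned by $\le\kappa$ points). Together these produce a cardinal $\mu$ with $\cf(\mu)\le\kappa$ and $\mu\le 2^\kappa$ strictly between $\kappa$ and $\density(X)$; since $\kappa<\mu$ and $2^\kappa\ge\mu$, such $\mu$ is singular and not strong limit, i.e. $\mu\in\mathbf S$.

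For $(3)\Rightarrow(1)$ I would argue contrapositively and manufacture, from $\mathbf S\ne\emptyset$, a $0$-dimensional $T_2$ pd-example. Fix $\mu\in\mathbf S$, put $\kappa=\cf(\mu)<\mu$, fix an increasing sequence $\langle\mu_i:i<\kappa\rangle$ cofinal in $\mu$, and fix $\lambda<\mu$ with $2^\lambda\ge\mu$, so that a set $A$ of size $\mu$ may be identified with a family $\{f_a:a\in A\}$ of distinct points of $2^\lambda$. On $A$ I would impose a clopen topology refining the one induced from $2^\lambda$ just enough to push $\density$ up to $\mu$ along the cofinal tower, while keeping every basic neighbourhood of a point coded by a finite partial function on the $\lambda$ coordinates of $f_a$. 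The density computation should give $\density(X)=\mu$ because any set of size $<\mu$ must miss cofinally many levels of the tower; the pinning computation should give $\pd(X)\le\lambda<\mu$ because, for any neighbourhood assignment, the finitely coordinatized neighbourhoods can all be met by a set of $\le\lambda$ points chosen to realize the relevant finite patterns among the $\lambda$ coordinates.

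The main obstacle is the reconciliation of these two computations in the construction, and, on the inequality side, the precise bookkeeping in the lemma, since the two requirements pull in opposite directions. Large $\density$ must not arise from any large relatively discrete, or more generally cellular, clopen family: an adversary assigning singleton neighbourhoods to such a family would force $\pd$ back up to $\mu$. Thus the density $\mu$ has to be generated purely by the impossibility of a small dense set across $\kappa$ cofinal levels, while cellularity and the pinning number are held down at the coordinate level $\lambda$. This is exactly where both hypotheses on $\mu$ are indispensable: singularity supplies the short cofinal tower that inflates $\density$ to $\mu$ without a cellular family, and the failure of strong limit supplies the embedding into $2^\lambda$ that keeps pinning cheap. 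Correspondingly, the lemma driving $(1)\Rightarrow(2)$ must fail for precisely such $\mu$, which is what makes the equivalence with \WGCH\ tight.
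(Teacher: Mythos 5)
First, a point of orientation: the paper you were given does not prove Theorem~\ref{maintheorem} at all --- it imports it from \cite{JuhaszSoukupSzentmiklossy16}, where its proof splits into exactly the two pieces you identify: a ZFC structure theorem giving (1)$\Rightarrow$(2) (\cite[Theorem 3.1]{JuhaszSoukupSzentmiklossy16}: a \emph{neat} $T_2$ pd-example has cardinality $\nu^{+n}$ for some limit, non-strong-limit cardinal $\nu$) and a construction giving $\neg$(1)$\Rightarrow\neg$(3) (\cite[Theorem 3.3]{JuhaszSoukupSzentmiklossy16}). So your architecture --- the cycle, the trivial (2)$\Rightarrow$(3), the translation $\neg\WGCH \Leftrightarrow \mathbf{S}\ne\emptyset$, and the split into a ZFC lemma plus a construction --- is the right one. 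The trouble is that both of your sketches break down precisely where the work of that paper lies. Take the construction first. If every basic neighbourhood of a point $a$ is drawn from a pool of ``shapes'' --- a finite partial function $s\con f_a$ on the $\lambda$ coordinates, possibly cut with a tail of the $\kappa$-tower and with $a$ itself added --- then the nonempty sets of the form $\{y: f_y\supseteq s,\ y \mbox{ above level } i\}$, of which there are at most $\lambda\cdot\kappa=\lambda$ (we may assume $\cf(\mu)\le\lambda$), together with the singletons of isolated points, form a $\pi$-base of $X$. Assigning to each isolated point its own singleton shows the number of isolated points is at most $\pd(X)$, so if $\pd(X)\le\lambda$ then picking one point from each member of this $\pi$-base yields a \emph{dense} set of size $\le\lambda$, i.e. $\density(X)\le\lambda$, contradicting $\density(X)=\mu$. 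In other words, the ``$\le\lambda$ points realizing the relevant finite patterns'' pin down every assignment only because they are already dense: cheap pinning obtained by finite coordinatization is cheap density. If, to escape this, you let assigned neighbourhoods be proper subsets of the pattern sets (not from a fixed pool of $\le\lambda$ shapes), then your pinning argument collapses instead, since a point realizing $s$ need not lie in the assigned neighbourhood. Your density criterion is also invalid: a set of size $\cf(\mu)=\kappa<\mu$ can meet every level of the tower, and ``missing cofinally many levels'' does not preclude density anyway. This is why \cite[Theorem 3.3]{JuhaszSoukupSzentmiklossy16} is a genuinely intricate construction and not a refinement of the $2^\lambda$ topology along a tower.

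The lemma you propose for (1)$\Rightarrow$(2) is likewise not established, in two respects. Its quantitative form goes beyond the ZFC facts proved in \cite{JuhaszSoukupSzentmiklossy16}: bounds of the type $\mu\le 2^{\pd(X)}$ are there a $T_3$ phenomenon (\cite[Theorems 4.8 and 4.9]{JuhaszSoukupSzentmiklossy16}, which the present paper invokes only for $T_3$ spaces in the second half of Theorem~\ref{Delta}), and cofinality conclusions of the type $\cf(\cdot)\le\pd(X)$ are exactly what requires SSH in Section 5 of this paper. What is provable in ZFC, and what suffices, has a different shape: by Proposition~\ref{neat} one passes to a neat pd-example, \cite[Theorem 3.1]{JuhaszSoukupSzentmiklossy16} then gives $|X|=\nu^{+n}$ with $\nu$ a limit cardinal that is not strong limit, and this refutes \WGCH\ because \WGCH\ is equivalent to ``every limit cardinal is a strong limit''. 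More seriously, your sketched proof of the lemma does not work regardless of how it is phrased. Iterating ``absorb a pinning set'' cannot drive density upward: that pinning sets may remain far from dense is the very phenomenon the theorem is about, so a recursion which tacitly assumes that accumulating pinning sets approaches a dense set begs the question, and it produces no definite cardinal $\mu$ whose cofinality one could estimate. The ``$\le 2^\kappa$ local patterns'' count has no force either: traces of neighbourhoods on a \emph{non-dense} pinning set separate neither points nor neighbourhoods in a Hausdorff space --- for traces to separate anything one needs a dense set, and then there is nothing left to prove. The actual proof of the positive direction in \cite{JuhaszSoukupSzentmiklossy16} is a nontrivial induction exploiting cardinal arithmetic at strong limit cardinals, and nothing in your outline reconstructs it.
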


Let us call the topological space $X$ a \emph{pd-example} if $\pd(X) < \density(X)$
and denote by $\mathbf{S}$ the class of all singular cardinals that are not strong limit.
Thus Theorem \ref{maintheorem} says that (0-dimensional) $T_2$ pd-examples exist
iff $\mathbf{S} \ne \emptyset$.
The aim of this paper is to examine what is needed to obtain $T_2,\,T_3$, or $T_{3.5}$ pd-examples
with the additional properties of connectivity and/or homogeneity. Here is our first main result:

\begin{theorem}\label{maintheorem1}
TFAE with the negation of the statements in Theorem~\ref{maintheorem},
in particular with $\mathbf{S} \ne \emptyset$:
\begin{enumerate}
\item [(I)] There is a connected and locally connected $T_3$ pd-example;
\item[(II)] there is an abelian $T_2$ topological group pd-example.
\end{enumerate}
\end{theorem}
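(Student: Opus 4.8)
The plan is to close the cycle $\mathbf{S}\neq\emptyset\Rightarrow(\mathrm{II})\Rightarrow\mathbf{S}\neq\emptyset$ and $\mathbf{S}\neq\emptyset\Rightarrow(\mathrm{I})\Rightarrow\mathbf{S}\neq\emptyset$. The two implications into $\mathbf{S}\neq\emptyset$ are immediate: a connected locally connected $T_3$ space is in particular a $T_2$ pd-example, and an abelian $T_2$ topological group is a $T_2$ pd-example, so in either case Theorem~\ref{maintheorem} yields $\mathbf{S}\neq\emptyset$. All the work is in the two constructions, for which I first fix, using Theorem~\ref{maintheorem}, a $0$-dimensional $T_2$ pd-example $X$ with $\kappa:=\pd(X)<\density(X)=:\lambda$. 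Two elementary monotonicity facts for $\pd$ will drive both arguments: if $f\from Z\to W$ is a continuous surjection then $\pd(W)\le\pd(Z)$ (pull the neighbourhood assignment back along $f$, pin $Z$ down, and push the pinning set forward), and if $W=\bigcup_{n<\omega}W_n$ is a countable union of subspaces then $\pd(W)\le\aleph_0\cdot\sup_n\pd(W_n)$ (a union of pinning sets for the pieces pins down the whole).

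For (II) I would take $G=A(X)$, the free abelian topological group on $X$ (using that $X$, being $0$-dimensional and $T_2$, is Tychonoff); then $G$ is automatically an abelian $T_2$ topological group containing $X$ as a closed subspace. It is a standard fact from the theory of free topological groups that $\density(A(X))=\density(X)=\lambda$, so it remains to bound $\pd(A(X))$ away from $\lambda$. Writing $A(X)=\bigcup_{n<\omega}A_n(X)$ for the length filtration and noting that each $A_n(X)$ is a continuous image of $\big(X\oplus(-X)\oplus\{0\}\big)^n$ under the word map, the two facts above give $\pd(A_n(X))\le\pd(X^n)$ and hence $\pd(A(X))\le\aleph_0\cdot\sup_{n<\omega}\pd(X^n)$. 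Thus $A(X)$ is the required pd-example as soon as $\sup_{n}\pd(X^n)<\lambda$.

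For (I) the subtlety is that the target must be $T_3$ but not $T_{3.5}$, since under $\mathbf{S}\neq\emptyset$ alone there may be no connected $T_{3.5}$ pd-example; I would therefore realise the connectification as a quotient of $X\times[0,1]$ by an equivalence relation that collapses enough fibres to make the space connected and, by forcing small neighbourhoods to contain connecting arcs, locally connected, while gluing in the non-completely-regular manner of the classical connected-regular-but-not-Tychonoff examples. The quotient map $q\from X\times[0,1]\to Y$ is a continuous surjection, so $\pd(Y)\le\pd(X\times[0,1])$; arranging the relation so that $q$ does not collapse the $X$-directions keeps $\density(Y)=\lambda$. Hence $Y$ is a connected, locally connected $T_3$ pd-example provided $\pd(X\times[0,1])<\lambda$.

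The main obstacle in both constructions is exactly this control of the pinning down number of products: $\pd(X^2)$ can exceed $\pd(X)$, already because $\cel(Z)\le\pd(Z)$ while cellularity may jump on squares, so an arbitrary pd-example need not satisfy $\sup_n\pd(X^n)<\lambda$ or even $\pd(X\times[0,1])<\lambda$. The remedy is not to start from an arbitrary example but to produce at the outset a $0$-dimensional $T_2$ pd-example whose finite powers (for (II)) and whose product with $[0,1]$ (for (I)) still have pinning down number below $\lambda$; I would obtain such an $X$ from the internal structure of the witnesses built in \cite{JuhaszSoukupSzentmiklossy16} for $\mathbf{S}\neq\emptyset$, where the density is inflated by the failure of $\mu$ to be strong limit while the small pinning down number survives in all the relevant powers. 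The remaining verifications---regularity, connectedness and local connectedness of the glued space $Y$, and the equality $\density(A(X))=\density(X)$---are then routine or standard, so the crux is the power behaviour of $\pd$ together with the design of the non-Tychonoff gluing.
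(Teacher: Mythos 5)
Your reduction for part (I) breaks down at exactly the point you flag but do not resolve: the inequality $\pd(X\times[0,1])<\density(X)$ is \emph{not} a consequence of $\mathbf{S}\ne\emptyset$, and no choice of starting example can rescue it. By Theorem~\ref{Delta} and Corollary~\ref{ctT3.5}, in the model obtained by adding $\kappa^+$ Cohen reals to a model of \GCH\ (with $\kappa\ge\aleph_\omega$) we have $\mathbf{S}\ne\emptyset$, yet \emph{every} $T_3$ pd-example $Z$ there satisfies $\Delta(Z)\le\kappa<\kappa^+=\mathfrak{c}$. Since every nonempty open subset of $X\times[0,1]$ contains a set of the form $U\times I$ with $I$ a nondegenerate interval, we have $\Delta(X\times[0,1])\ge\mathfrak{c}>\kappa$, so in that model $X\times[0,1]$ is never a pd-example, i.e.\ $\pd(X\times[0,1])=\density(X\times[0,1])=\density(X)$ for every $T_3$ candidate $X$. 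Your bound $\pd(Y)\le\pd(X\times[0,1])$ then yields nothing. The obstruction is the fiber $[0,1]$ itself: controlling $\pd$ of such a product via Lemma~\ref{vrijdageen} requires $|X|\ge|[0,1]|=\mathfrak{c}$, which $\mathbf{S}\ne\emptyset$ alone cannot supply. The paper's proof of (I) is designed around precisely this point: it replaces $[0,1]$ by the Ciesielski--Wojciechowski space, a \emph{separable connected $T_3$ space $P$ of size $\omega_1$}, so that Lemma~\ref{vrijdageen} applies to any (neat) pd-example $X$ (these are always uncountable); the cone $Z=Z(X,P,p)$ is then a connected $T_3$ pd-example, and local connectedness is obtained not by an ad hoc non-Tychonoff gluing --- which you never specify, and whose $T_3$-ness would itself be a serious issue, since quotients destroy separation --- but by passing to the superextension $\lambda_f(Z)$, which is connected and locally connected by (V3), is $T_3$ by Lemma~\ref{IsT_3}, and preserves $\density$ and (for neat spaces) $\pd$ by (V5) and Lemma~\ref{pdvanlambda}.

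For part (II) your construction is the paper's ($A(X)$ together with the bound $\pd(A(X))\le\aleph_0\cdot\sup_n\pd(X^n)$ coming from the word-length filtration), and you are right that the crux is $\sup_n\pd(X^n)<\density(X)$; indeed Example~\ref{eerstevoorbeeld} shows $\pd(X^2)>\pd(X)$ is possible for $T_2$ pd-examples, so this is a real issue. But you leave it unresolved, appealing vaguely to ``the internal structure of the witnesses'' in \cite{JuhaszSoukupSzentmiklossy16}. The paper's fix is general and requires no inspection of those witnesses: by Proposition~\ref{neat}, every pd-example has a \emph{neat} open subspace that is still a pd-example, and for neat $T_2$ spaces Corollary~\ref{X^n} (a consequence of Lemma~\ref{nuldelemma}) gives $\pd(X^n)=\pd(X)$ for all $n<\omega$. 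This single ingredient --- neatness --- is what makes both constructions work (it is also what Lemma~\ref{pdvanlambda} needs for $\lambda_f$), and it is the idea missing from your proposal; without it neither implication out of $\mathbf{S}\ne\emptyset$ is actually established. The reverse implications, as you say, are immediate from Theorem~\ref{maintheorem}.
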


It is conspicuous that this result does not provide us with a connected $T_{3.5}$ pd-example.
In fact, we shall show that $\mathbf{S} \ne \emptyset$ is not sufficient for this.
Our second main theorem clarifies the reason behind this fact.

\begin{theorem}\label{maintheorem2}
If there is $\mu \in \mathbf{S}$ with $\mu \ge \mathfrak{c}$ then there is an abelian $T_2$ topological group pd-example
which is also arcwise connected and locally arcwise connected.
Moreover, if Shelah's strong hypothesis (in short: SSH) holds then the existence of a connected $T_{3.5}$ pd-example
implies that there is $\mu \in \mathbf{S}$ with $\mu \ge \mathfrak{c}$. Consequently, under SSH the converse of
the first statement is also valid.
\end{theorem}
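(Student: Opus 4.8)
The plan is to prove the three assertions separately: the forward implication by a free-object construction, the converse under SSH by combining a connectivity lower bound for $\pd$ with the singular-cardinal analysis behind Theorem~\ref{maintheorem}, and the last assertion as a formal consequence of the first two. For the forward implication I would start, using $\mathbf S\ne\emptyset$ and the failure of \WGCH, from a $0$-dimensional $T_2$ pd-example $X$ produced in \cite{JuhaszSoukupSzentmiklossy16}; since we are handed $\mu\in\mathbf S$ with $\mu\ge\mathfrak c$, that construction can be arranged so that $\mathfrak c\le\pd(X)<\density(X)$. I would then pass to the \emph{free locally convex topological vector space} $L(X)$ over $X$, the free object on $X$ in the category of locally convex spaces, in which $X$ sits as a closed subspace spanning a dense subspace and every point is a finite combination $\sum_{i<n}r_ix_i$. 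As the additive group of a Hausdorff locally convex space, $L(X)$ is an abelian $T_2$ (hence $T_{3.5}$) topological group; being convex it is arcwise connected, and as a base of $0$-neighborhoods may be taken convex it is locally arcwise connected. Moreover $\density(L(X))=\density(X)$, since rational combinations of a dense subset of $X$ are dense in $L(X)$.

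The heart of this direction is the estimate $\pd(L(X))\le\pd(X)\cdot\mathfrak c$. Given a neighborhood assignment $\mathcal U$ on $L(X)$, each basic neighborhood is controlled by a continuous seminorm arising from a continuous pseudometric on $X$, so I would reduce $\mathcal U$ to a neighborhood assignment on $X$ and its finite powers, fix a pinning set $S\subseteq X$ of size $\pd(X)$ for the reduced assignment, and then verify that the set $P=\big\{\sum_{i<n}r_is_i:n<\omega,\ r_i\in\reals,\ s_i\in S\big\}$ of all finite real combinations from $S$ meets every member of $\mathcal U$. Since $|P|=\pd(X)\cdot\mathfrak c$, this yields $\pd(L(X))\le\pd(X)\cdot\mathfrak c=\pd(X)$, the last equality using $\pd(X)\ge\mathfrak c$; hence $\pd(L(X))\le\pd(X)<\density(X)=\density(L(X))$ and $L(X)$ is the required pd-example. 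I expect the main obstacle here to be exactly this reduction: the neighborhoods of a point with many terms depend on a pseudometric that varies with the point, so the delicate step is to approximate the real coefficients uniformly enough to be caught by combinations of $S$. This is precisely where the factor $\mathfrak c$, and thus the hypothesis $\mu\ge\mathfrak c$, becomes unavoidable.

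For the converse under SSH, let $Y$ be a connected $T_{3.5}$ pd-example, so $\pd(Y)=\rho<\delta=\density(Y)$. The first step is a lower bound tying connectivity to the continuum: I would prove that every connected Tychonoff space $Z$ satisfies $\pd(Z)\ge\min\{\density(Z),\mathfrak c\}$, by constructing a neighborhood assignment whose pinning forces a set large enough to realize $\min\{\density(Z),\mathfrak c\}$ many independent positions registered by the continuous real-valued functions that connectivity renders nondegenerate. Applied to $Y$, the inequality $\min\{\delta,\mathfrak c\}\le\rho<\delta$ rules out $\min\{\delta,\mathfrak c\}=\delta$, whence $\mathfrak c\le\rho=\pd(Y)$. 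I regard this connectivity lower bound as the principal obstacle of the whole theorem. The second step feeds $Y$ into the quantitative singular-cardinal analysis underlying Theorem~\ref{maintheorem}: from $\pd(Y)<\density(Y)$ one extracts a non-strong-limit singular cardinal $\mu$ attached to the gap, with $\rho<\mu\le\delta$, and it is exactly here that SSH is invoked, to guarantee that the pcf-theoretic cardinal produced is an honest member of $\mathbf S$. Together with $\rho\ge\mathfrak c$ this gives $\mu\in\mathbf S$ with $\mu\ge\mathfrak c$.

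Finally, the third assertion is immediate. Under SSH the first statement produces, from any $\mu\in\mathbf S$ with $\mu\ge\mathfrak c$, a (arcwise) connected $T_{3.5}$ pd-example, while the second statement turns any connected $T_{3.5}$ pd-example back into a $\mu\in\mathbf S$ with $\mu\ge\mathfrak c$; hence the two conditions are equivalent, which is the stated converse.
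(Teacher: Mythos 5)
Your forward direction is legitimate in its choice of construction: passing to the free locally convex space $L(X)$ is exactly how the paper proves Theorem~\ref{maintheorem3}, which strengthens the first assertion of Theorem~\ref{maintheorem2} (the paper's own proof of Theorem~\ref{maintheorem2} instead uses $A(X)^\bullet$, the free abelian topological group followed by the Hartman--Mycielski construction). But your execution of the key estimate has a genuine gap. You only prove $\pd(L(X))\le\pd(X)\cdot\mathfrak c$, by letting the pinning set consist of \emph{all} real combinations of points of $S$, and you then need $\pd(X)\ge\mathfrak c$ to absorb the factor $\mathfrak c$; accordingly you assert that the construction of \cite{JuhaszSoukupSzentmiklossy16} ``can be arranged so that $\mathfrak c\le\pd(X)<\density(X)$''. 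That assertion is unsupported: \cite[Theorem 3.3]{JuhaszSoukupSzentmiklossy16} gives only the \emph{upper} bound $\pd(X)\le\lambda$, and in the case where $\mu=\mathfrak c$ is the only member of $\mathbf S$ that is $\ge\mathfrak c$ (a legitimate instance of the hypothesis, e.g.\ $\mathfrak c$ singular with \GCH\ above $\mathfrak c$, since $\mathfrak c$ is never strong limit) the construction yields $\density(X)=\mu=\mathfrak c$, which \emph{forces} $\pd(X)<\mathfrak c$; so the space you require cannot be obtained from your cited source, yet the theorem still has to be proved there. The paper removes the factor $\mathfrak c$ rather than dominating it: by Lemma~\ref{vrijdageen}, $\pd(X\times\reals)=\pd(X)$ as soon as $X$ is neat and $|X|=\Delta(X)\ge\mathfrak c$, because the real coefficients are pinned by a \emph{countable} dense set of reals (only the cardinality $|\reals|=\mathfrak c\le|X|$ and the separability of $\reals$ enter); then (FLC3), Corollary~\ref{X^n} and Lemma~\ref{eerstelemmaA} give $\pd(L(X))\le\pd(X)$ outright (Proposition~\ref{tweedezondagNIEUW}). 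Thus the hypothesis $\mu\ge\mathfrak c$ is used only to secure $|X|\ge\mathfrak c$, never $\pd(X)\ge\mathfrak c$; the rational-approximation of coefficients is precisely the resolution of the ``delicate step'' you flag.

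The converse direction fails more seriously: your proposed key lemma, that every connected Tychonoff $Z$ satisfies $\pd(Z)\ge\min\{\density(Z),\mathfrak c\}$, is consistently false --- indeed it is refuted by the first part of the very theorem being proved. It is consistent (also with SSH) that $\mathfrak c=\aleph_2$ while $2^{\aleph_1}>\aleph_{\omega_1}$; then $\mu=\aleph_{\omega_1}\in\mathbf S$, $\mu>\mathfrak c$, witnessed by $\lambda=\aleph_1$, and \cite[Theorem 3.3]{JuhaszSoukupSzentmiklossy16} gives a neat $T_{3.5}$ pd-example $X$ with $\pd(X)\le\aleph_1$ and $\density(X)=\aleph_{\omega_1}$. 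Since the functors involved preserve both $\pd$ and $\density$ (Proposition~\ref{eerstezondag} and the Hartman--Mycielski lemma, or Proposition~\ref{tweedezondagNIEUW}), the resulting pathwise connected Tychonoff pd-example has $\pd\le\aleph_1<\mathfrak c<\density$, contradicting your lemma. So no proof of that lower bound can exist, even assuming SSH. The paper uses connectivity in a much weaker way: for a non-singleton connected $T_{3.5}$ space, every nonempty open set has size at least $\mathfrak c$, i.e.\ $\Delta(X)\ge\mathfrak c$ --- a bound on open sets, not on $\pd$. One then passes by Proposition~\ref{neat} to a neat open subspace $Y$ that is still a pd-example, so $|Y|=\Delta(Y)\ge\Delta(X)\ge\mathfrak c$, and takes $\mu:=|Y|$: by \cite[Theorem 3.5]{JuhaszSoukupSzentmiklossy16} SSH makes $|Y|$ singular, and by \cite[Theorem 3.1]{JuhaszSoukupSzentmiklossy16} $|Y|$ cannot be strong limit, so $\mu\in\mathbf S$ and $\mu\ge\mathfrak c$. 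Your final paragraph, the formal equivalence under SSH, is fine as a deduction, but it rests on these two broken legs.
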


Of course, any locally convex topological $\mathbb{R}$-vector space is
an abelian $T_2$ topological group which is arcwise connected and locally arcwise connected.
Thus our third main result is a strengthening of Theorem \ref{maintheorem2}.
Still, we decided to present the two results separately because their proofs are based
on two very different constructions that are both interesting in their own rights.

\begin{theorem}\label{maintheorem3}
If there is $\mu \in \mathbf{S}$ with $\mu \ge \mathfrak{c}$ then there is a
pd-example which is a locally convex topological $\mathbb{R}$-vector space.
\end{theorem}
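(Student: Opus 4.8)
The plan is to realize the example as the free locally convex topological $\reals$-vector space $L(Y)$ over a carefully chosen base space $Y$. Recall that for any Tychonoff $Y$ the space $L(Y)$ is a locally convex $\reals$-vector space into which $Y$ embeds, and that every $f\in C(Y)$ extends to a continuous linear functional $\hat f\from L(Y)\to\reals$. Since a (Hausdorff) topological vector space over $\reals$ is automatically Tychonoff and convex-connected, all the side conclusions are free and the whole problem reduces to arranging $\pd(L(Y))<\density(L(Y))$. I would in fact aim for the two-sided estimate $\pd(L(Y))<\mu\le\density(L(Y))$.

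First I would construct the base space. Using $\mu\in\mathbf{S}$ together with $\mu\ge\mathfrak{c}$, I would refine the construction behind Theorem~\ref{maintheorem} from \cite{JuhaszSoukupSzentmiklossy16} to produce a Tychonoff pd-example $Y$ that is in addition a $P_{\mathfrak{c}^+}$-space, i.e.\ the intersection of any at most $\mathfrak{c}$ many open sets is again open, with $\density(Y)=\mu$ and, crucially, with $\sup_{n<\omega}\pd(Y^n)<\mu$. This uniform bound (rather than merely $\pd(Y^n)<\mu$ for each $n$) is needed because $\cf(\mu)$ may be $\aleph_0$. I expect this to be the main obstacle: one must impose the strong closure property $P_{\mathfrak{c}^+}$ while simultaneously keeping the density equal to $\mu$ and all finite-power pinning numbers below $\mu$, and it is precisely here that the hypothesis $\mu\ge\mathfrak{c}$ buys the necessary room below $\mu$ (e.g.\ if $\lambda<\mu\le 2^\lambda$ witnesses $\mu\in\mathbf{S}$, one expects $\sup_n\pd(Y^n)\le\lambda<\mu$). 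I would also record the routine fact, proved by the usual argument on basic open boxes, that finite powers of a $P_{\mathfrak{c}^+}$-space are again $P_{\mathfrak{c}^+}$.

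Only the lower bound on density is needed, and it is easy. If $S\con L(Y)$ is dense, let $D=\bigcup_{s\in S}\operatorname{supp}(s)$ be the union of the finite supports, so $|D|\le|S|$. Were $D$ not dense, pick $y\notin\CL{D}$ and, by complete regularity, $f\in C(Y)$ with $f(y)=1$ and $f\equiv 0$ on $\CL{D}$; then $\hat f$ vanishes on every $s\in S$ while $\hat f(\delta_y)=1$, so $\{v:\hat f(v)>1/2\}$ is a nonempty open set disjoint from $S$, a contradiction. Hence $D$ is dense in $Y$ and $\density(Y)\le\density(L(Y))$, giving $\density(L(Y))\ge\mu$.

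For the pinning number I would use the stratification $L(Y)=\bigcup_{n<\omega}A_n$, where $A_n=\{\sum_{i<n}a_iy_i:|a_i|\le n,\ y_i\in Y\}$ is the continuous image of $([-n,n]\times Y)^n\cong[-n,n]^n\times Y^n$ under $((a_i,y_i))_i\mapsto\sum_i a_iy_i$. Two facts drive the estimate. First, $\pd$ does not increase under continuous surjections (pull back a neighbourhood assignment and push a pinning set forward), so $\pd(A_n)\le\pd([-n,n]^n\times Y^n)$; and restricting any neighbourhood assignment on $L(Y)$ to each $A_n$ shows $\pd(L(Y))\le\aleph_0\cdot\sup_n\pd(A_n)$. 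Second, I would prove the product lemma: if $K$ is compact metrizable and $W$ is $P_{\mathfrak{c}^+}$, then $\pd(K\times W)\le\aleph_0\cdot\pd(W)$. Indeed, for fixed $w$ the sets $O_{t,w}$ assigned to the at most $\mathfrak{c}$ many points $(t,w)$, $t\in K$, are open neighbourhoods of $w$, so $O^*_w:=\bigcap_{t\in K}O_{t,w}$ is still open by $P_{\mathfrak{c}^+}$-ness; pinning the assignment $w\mapsto O^*_w$ on $W$ by a set $P$ with $|P|\le\pd(W)$ and choosing a countable dense $Q\con K$, the product $Q\times P$ pins down $K\times W$. Applying this with $K=[-n,n]^n$ and $W=Y^n$ yields $\pd(A_n)\le\pd(Y^n)$, whence $\pd(L(Y))\le\aleph_0\cdot\sup_n\pd(Y^n)<\mu\le\density(L(Y))$, so $L(Y)$ is the desired locally convex pd-example.
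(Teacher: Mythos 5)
Your reduction machinery is essentially sound: the stratification $L(Y)=\bigcup_n A_n$ with $A_n$ a continuous image of $[-n,n]^n\times Y^n$, the monotonicity of $\pd$ under continuous surjections, the finite productivity of the $P_{\cont^+}$ property, and your product lemma for (compact metrizable)$\,\times\,$($P_{\cont^+}$) are all correct. The gap is that the whole proof hangs on a base space you never construct: a Tychonoff $P_{\cont^+}$ pd-example $Y$ with $\density(Y)=\mu$ and $\sup_n\pd(Y^n)<\mu$. This is not a routine ``refinement'' of the construction in \cite{JuhaszSoukupSzentmiklossy16} --- that construction carries no such closure property, and you give no argument that one can be imposed while keeping the density and the pinning-down bounds. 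Worse, in the case $\mu=\cont$ the space you postulate provably does not exist: in a $T_1$ space in which intersections of at most $\cont$ many open sets are open, every subset of size at most $\cont$ is closed and discrete (complements of singletons are open, and one intersects at most $\cont$ many of them), so a dense set of size $\le\cont$ would be closed, forcing $Y$ to be a discrete space of size $\le\cont$, whence $\pd(Y)=\density(Y)$ and $Y$ is no pd-example. This is fatal, because $\mu=\cont$ can be the only witness of the hypothesis: it is consistent that $\cont$ is singular and not strong limit while every singular cardinal above $\cont$ is strong limit (e.g.\ $2^{\aleph_0}=\aleph_{\omega_1}$, $2^\kappa=\aleph_{\omega_1+1}$ for $\omega_1\le\kappa\le\aleph_{\omega_1}$, and \GCH\ above), and in such a model your proof produces nothing.

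The misstep is the diagnosis that the compact factors $[-n,n]^n$ force some closure property on $Y$; in fact a pure cardinality argument handles them, and this is precisely where the paper uses $\mu\ge\cont$. By \cite{JuhaszSoukupSzentmiklossy16} and Proposition~\ref{neat}, take a \emph{neat} $0$-dimensional $T_2$ (hence $T_{3.5}$) pd-example $Y$ with $|Y|=\Delta(Y)=\density(Y)=\mu\ge\cont$ and $\pd(Y)<\mu$. Given any neighbourhood assignment on $Y\times\reals$ by basic boxes, the family of $Y$-side factors has cardinality at most $|Y\times\reals|=|Y|=\Delta(Y)$, so Lemma~\ref{nuldelemma} pins it by $\pd(Y)$ many points of $Y$, and crossing that set with a countable dense subset of $\reals$ pins the whole assignment; this is Lemma~\ref{vrijdageen}, giving $\pd(Y\times\reals)=\pd(Y)$. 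Since $Y\times\reals$ is neat, Corollary~\ref{X^n} gives $\pd\big((Y\times\reals)^n\big)=\pd(Y)$ for every $n$, and then the stratification (the paper's (FLC3), i.e.\ your $A_n$'s) together with Lemma~\ref{eerstelemmaA} yields $\pd(L(Y))\le\pd(Y)<\mu\le\density(L(Y))$, where the density bound is exactly your (correct) support argument. This is the content of Proposition~\ref{tweedezondagNIEUW}; it needs no $P$-space property and no construction beyond the one already in \cite{JuhaszSoukupSzentmiklossy16}.
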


\bigskip

\section{Preliminary observations and definitions}

\bigskip

If $X$ is any space then $\tau(X)$ denotes its topology and $\tau^+(X)=\tau(X)\setminus \{\emptyset\}$.
We also put
$\triangle(X) = \min\{|G| : G\in \tau^+(X)\}$ and call $X$ \emph{neat} if $\triangle(X)=|X|$.

First in this section we collect some facts that will be important later when we calculate the pinning down numbers of various spaces.

\begin{lemma}\label{nuldelemma}
Let $X$ be a space and let $\mathcal{U}\con \tau^+(X)$ with $|\mathcal{U}| \le {\triangle(X)}$. Then there is a neighborhood assignment on $X$ that contains $\mathcal{U}$  in its range,
hence $\pd(\mathcal{U}) \le \pd(X)$.
\end{lemma}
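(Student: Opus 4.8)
The plan is to realize $\mathcal{U}$ as a subfamily of the range of a single neighborhood assignment by choosing, for each $U \in \mathcal{U}$, a distinct ``anchor'' point $x_U \in U$ and then letting the assignment take the value $U$ at $x_U$. The point is that a neighborhood assignment is a \emph{function} $V \from X \to \tau^+(X)$ with $x \in V_x$ for all $x$, so in order to have $U$ appear in its range via $V_{x_U}=U$ we are forced to pick $x_U \in U$, and these anchor points must be kept pairwise distinct so that $V$ is well defined. Thus the whole lemma reduces to producing a system of distinct representatives for the family $\mathcal{U}$.

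First I would dispose of the trivial case $X=\emptyset$, and otherwise fix an injective enumeration $\mathcal{U} = \{U_\alpha : \alpha < \kappa\}$, where $\kappa = |\mathcal{U}| \le \triangle(X)$. The representatives $x_\alpha \in U_\alpha$ are then chosen by transfinite recursion: at stage $\alpha$ the set $\{x_\beta : \beta < \alpha\}$ of already-used points has cardinality at most $|\alpha| < \kappa \le \triangle(X) \le |U_\alpha|$, the last inequality holding because every member of $\tau^+(X)$ has size at least $\triangle(X)$ by the very definition of $\triangle(X)$. Hence $U_\alpha \setminus \{x_\beta : \beta < \alpha\} \ne \emptyset$ and we may pick a fresh $x_\alpha$ in it, yielding pairwise distinct anchors $x_\alpha \in U_\alpha$.

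Next I would define $V$ by $V_{x_\alpha} = U_\alpha$ for each $\alpha < \kappa$ and $V_y = X$ for every $y \in X$ not of the form $x_\alpha$. Each $V_x$ is then an open neighborhood of $x$ (for the anchors because $x_\alpha \in U_\alpha \in \tau^+(X)$, and $X$ itself is open since $X\ne\emptyset$), so $V$ is a genuine neighborhood assignment whose range contains $\mathcal{U}$. For the final inequality, observe that any set $S$ meeting every member of $\range(V)$ in particular meets every $U \in \mathcal{U} \con \range(V)$; as all members in sight are nonempty, such an $S$ pins down $\mathcal{U}$, and therefore $\pd(\mathcal{U}) \le \pd(V) \le \pd(X)$.

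The only point demanding care is the cardinal arithmetic guaranteeing that a fresh representative always exists, namely the inequality $|\alpha| < \kappa$ for every $\alpha < \kappa$, which uses that $\kappa$ is a cardinal, together with the defining property $\triangle(X) \le |U_\alpha|$ of nonempty open sets. I do not expect a genuine obstacle here: the hypothesis $|\mathcal{U}| \le \triangle(X)$ is precisely tuned so that at each stage strictly fewer points have been consumed than are available in $U_\alpha$, and so the recursion never stalls.
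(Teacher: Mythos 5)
Your proposal is correct and follows essentially the same route as the paper: a transfinite recursion picking pairwise distinct representatives $x_U \in U$ (possible since $|\mathcal{U}| \le \triangle(X) \le |U|$ for each $U \in \tau^+(X)$), followed by extending the partial assignment $x_U \mapsto U$ to a global neighborhood assignment. You merely spell out the details the paper calls a ``trivial transfinite recursion'' and make the extension explicit by assigning $X$ itself to the remaining points, which is a perfectly valid way to finish.
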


\begin{proof}
Since every nonempty open subset of $X$ has size at least $\triangle(X)$ and $|\mathcal{U}|\le \triangle(X)$, by a trivial transfinite recursion, it is possible to pick for every $U\in \mathcal{U}$ a point $x_U\in U$ such that for distinct $U,V\in \mathcal{U}$ the points $x_U$ and $x_{V}$ are different. Put $A=\{x_U: U\in \mathcal{U}\}$. Then $x_U\mapsto U$ is a `partial' neighborhood assignment on $A$ which can be extended to
the required global neighborhood assignment on $X$.
\end{proof}

\begin{corollary}\label{derdelemma}
Let $X$ be a neat space with $\pd(X) \ge \omega$ and $\kappa \le \pd(X)$ be a cardinal such that $|X^\kappa|=|X|$. Then $\pd(X^\kappa) = \pd(X)$.
\end{corollary}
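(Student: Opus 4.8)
The plan is to prove the two inequalities $\pd(X)\le\pd(X^\kappa)$ and $\pd(X^\kappa)\le\pd(X)$ separately, noting in advance that the three hypotheses (neatness, $\kappa\le\pd(X)$, and $|X^\kappa|=|X|$) are only needed for the second one. Throughout I will use that $X$ is infinite, which follows from $\pd(X)\ge\omega$ since $\pd(X)\le\density(X)\le|X|$.

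For the lower bound I would exploit that $X$ is a retract of $X^\kappa$. Fixing $p\in X$, the projection $\pi_0\from X^\kappa\to X$ has the continuous section $s\from X\to X^\kappa$ given by $s(x)=(x,p,p,\dots)$, with $\pi_0\circ s=\mathrm{id}_X$. I would then observe that $\pd$ is monotone under retractions in general: given a neighbourhood assignment $\{U_y:y\in X\}$ on $X$, the sets $V_z:=\pi_0^{-1}(U_{\pi_0(z)})$ form a neighbourhood assignment on $X^\kappa$ (each is open as $\pi_0$ is continuous, and $z\in V_z$ since $\pi_0(z)\in U_{\pi_0(z)}$). If $B\con X^\kappa$ pins this down, then $\pi_0[B]$ pins down $\{U_y\}$: for $y\in X$ we have $\pi_0(s(y))=y$, hence $V_{s(y)}=\pi_0^{-1}(U_y)$, so some $b\in B$ satisfies $\pi_0(b)\in U_y$, giving $\pi_0(b)\in\pi_0[B]\cap U_y$. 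As $|\pi_0[B]|\le|B|$, this yields $\pd(X)\le\pd(X^\kappa)$ with no further hypotheses.

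For the upper bound, take any neighbourhood assignment $\mathcal W=\{W_x:x\in X^\kappa\}$. I would first shrink each $W_x$ to a basic open set $\prod_{i\in F_x}U_{x,i}\times\prod_{i\notin F_x}X$ with $F_x$ finite and $x(i)\in U_{x,i}$; since the shrunk sets are contained in the originals, any pinning set for them also pins down $\mathcal W$. The collection $\mathcal V=\{U_{x,i}:x\in X^\kappa,\ i\in F_x\}\con\tau^+(X)$ then has size at most $|X^\kappa|\cdot\aleph_0=|X|=\triangle(X)$, using neatness and $|X^\kappa|=|X|$. Hence Lemma~\ref{nuldelemma} applies to $X$ and furnishes a set $A\con X$ with $|A|\le\pd(X)$ that meets every $U_{x,i}$.

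The main step, and the expected obstacle, is to assemble from $A$ a single pinning set $S\con X^\kappa$ of size $\le\pd(X)$: one point of $X^\kappa$ must realise all the finitely many coordinate constraints of a given $W_x$ simultaneously, while the total number of points stays bounded by $\pd(X)$. The naive candidate $A^\kappa$ is far too large (of size up to $\pd(X)^\kappa$), so instead I would use only finitely supported points. Let $S$ consist of all $s_{F,g}$, where $F\con\kappa$ is finite and $g\from F\to A$, defined by $s_{F,g}(i)=g(i)$ for $i\in F$ and $s_{F,g}(i)=p$ otherwise. Given $W_x$, choosing $g(i)\in A\cap U_{x,i}$ for each $i\in F_x$ places $s_{F_x,g}$ in $W_x$, so $S$ pins down $\mathcal W$. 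Its size is controlled exactly by the remaining hypotheses: there are at most $\pd(X)$ finite subsets $F\con\kappa$ (as $\kappa\le\pd(X)$ and $\pd(X)\ge\omega$), and for each $F$ there are at most $|A|^{|F|}\le\pd(X)$ functions $g$, since $|F|$ is finite and $\pd(X)$ is infinite; thus $|S|\le\pd(X)$. Combining this with the lower bound yields $\pd(X^\kappa)=\pd(X)$.
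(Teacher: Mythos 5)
Your proof is correct and takes essentially the same approach as the paper: the lower bound via the projection $X^\kappa\to X$ (the paper phrases this as $X$ being a continuous image of $X^\kappa$), and the upper bound by shrinking to basic boxes, applying Lemma~\ref{nuldelemma} to the family of coordinate factors to get $A\con X$, and then pinning down with the finitely supported points valued in $A$ and based at $p$ --- your set $S$ is exactly the paper's set $\sigma$. Your cardinality estimate for $S$ (splitting into the count of finite $F\con\kappa$ and of maps $g\from F\to A$) is in fact slightly more explicit than the paper's.
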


\begin{proof}
Since $X$ is a continuous image of $X^\kappa$, we clearly have $\pd(X) \le \pd(X^\kappa)$. To prove the converse inequality, let $U\from X^\kappa\to \tau(X^\kappa)$ be any neighborhood assignment.
We may assume that for every $y\in X^\kappa$ there are a finite $F(y)\con \kappa$ and for all $\alpha\in F(y)$ open sets $U^y_\alpha \in \tau(X)^+$ such that
$$
    y\in \{z\in X^\kappa : (\forall\, \alpha\in F(y)) (z_\alpha\in U^y_\alpha)\} = U(y).
$$
Now put
$$
    \mathcal{U}=\{U^y_\alpha : y\in X^\kappa, \alpha < \kappa\}.
$$
Then $|\mathcal{U}| \le |X|^\kappa = |X| = \Delta(X)$, hence by Lemma~\ref{nuldelemma} we have $\pd(\mathcal{U}) \le \pd(X)$.
Let $A$ be a subset of $X$ of size $\pd(X)$ that meets every member of $\mathcal{U}$. Fix $p\in X$ and put
$$
    \sigma = \{y\in X^\kappa : (\exists\, F\in [\kappa]^{<\omega}) (\forall \alpha\in F)(y_\alpha\in A)\, \& \, (\forall\, \alpha \not\in F)(y_\alpha = p)\}.
$$
Then $|\sigma| = \big|[A]^{< \omega}\big| = |A| = \pd(X)$ and for every $y\in X$ we have $\sigma\cap U(y)\not=\emptyset$. Hence we are done.
\end{proof}

The following obvious result will play an essential role in our constructions.

\begin{corollary}\label{X^n}
For every neat $T_2$ space $X$ and for every $0 < n < \omega$ we have
$\pd(X^n) =\pd(X)$.
\end{corollary}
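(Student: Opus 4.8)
The plan is to recognize this as the special case $\kappa = n$ of Corollary~\ref{derdelemma}, so that almost all the work is already done. The inequality $\pd(X) \le \pd(X^n)$ is free: each coordinate projection $X^n \to X$ is a continuous surjection, and $\pd$ is monotone under such maps (pull a neighbourhood assignment on $X$ back along the projection, pin the pullback down by a set of size $\pd(X^n)$, and project that set back to $X$). Hence the only real task is to check that the hypotheses of Corollary~\ref{derdelemma} hold with $\kappa = n$.

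Before doing so I would dispose of the degenerate case. If $|X| = 1$ the statement is trivial, so assume $|X| > 1$. Since $X$ is $T_2$, an isolated point would give a nonempty open set of size $1 < |X|$, contradicting neatness $\triangle(X) = |X|$; thus $X$ is crowded, and a nonempty crowded $T_2$ space is infinite. Consequently $|X| \ge \omega$ and $\triangle(X) = |X| \ge \omega$.

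Now I would verify the hypotheses of Corollary~\ref{derdelemma} for the finite cardinal $\kappa = n$. The cardinality condition $|X^n| = |X|$ is immediate, as $|X|^n = |X|$ for infinite $|X|$ and $1 \le n < \omega$, and $\triangle(X) = |X|$ by neatness. The requirement $n \le \pd(X)$ reduces to $\pd(X) \ge \omega$, which is the one point carrying genuine content. For it I would exhibit an infinite cellular family: using Hausdorffness and crowdedness, recursively choose disjoint nonempty open sets $A_k, B_k$ with $A_{k+1}, B_{k+1} \subseteq B_k$, so that $\{A_k : k < \omega\}$ is pairwise disjoint. This gives $\cel(X) \ge \omega$, whence $\pd(X) \ge \cel(X) \ge \omega$.

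With all three hypotheses in place, Corollary~\ref{derdelemma} applied with $\kappa = n$ yields $\pd(X^n) = \pd(X)$ at once. I expect the estimate $\pd(X) \ge \omega$ to be the only step requiring an argument; everything else is either bookkeeping or a direct appeal to the already-established Corollary~\ref{derdelemma}.
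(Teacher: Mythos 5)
Your proof is correct and follows exactly the route the paper intends: the statement is presented there as an immediate ("obvious") consequence of Corollary~\ref{derdelemma}, specialized to finite $\kappa=n$. You merely make explicit the details the paper leaves implicit, namely that a non-singleton neat $T_2$ space is crowded and infinite, that $|X^n|=|X|$, and that $\pd(X)\ge\cel(X)\ge\omega$ via an infinite cellular family, all of which are verified correctly.
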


But can the assumption of neatness of $X$ dropped here? Clearly yes if $\pd(X) = \density(X)$,
hence a counterexample is a $T_2$ pd-example.
And in fact, the existence of a $T_2$ pd-example, that we know is equivalent to $\mathbf{S} \ne \emptyset$, does yield a counterexample.

\begin{example}\label{eerstevoorbeeld}
If $\mathbf{S} \ne \emptyset$ then there is a 0-dimensional $T_2$  space $X$ such that $\pd(X^2) > \pd(X)$.
\end{example}

\begin{proof}
If $\mu \in \mathbf{S}$ then there is a cardinal $\lambda$ satisfying $\cf(\mu) \le \lambda < \mu$ and $2^\lambda > \mu$.
The construction theorem ~\cite[Theorem 3.3]{JuhaszSoukupSzentmiklossy16}, in fact a simplified version of it,
then yields a 0-dimensional $T_2$ space $Y$ such that
$\pd(Y) \le \lambda < \density(Y) = \mu$ and $w(Y) \le 2^\lambda$.

Let $Z$ denote the Cantor cube of weight $2^\lambda$. We claim that $\pd(Y \times Z) = \mu$. Then the topological sum $X = Y \oplus Z$ is the space we are looking for.
Indeed, $\pd(Y) \le \lambda$ and $\pd(Z) = \density(Z) \le \lambda$ obviously imply $\pd(X) \le \lambda$, while $\pd(X^2) \ge \mu = \pd(Y \times Z)$ holds
because $Y \times Z$ is an open subspace of $X^2$.

Now, let $\mathcal{B}$ be a base of $Y$ of size $\le 2^\lambda$ and consider the collection
$$
   \mathcal{V} = \{B \times Z : B \in \mathcal{B}\} \subset \tau^+(Y \times Z).
$$
Then $|\mathcal{V}| = |\mathcal{B}| \le 2^\lambda$  and we have $\Delta(Y \times Z) = \Delta(Z) = 2^\lambda$, hence
Lemma~\ref{nuldelemma} implies $\pd(Y \times Z) \ge \pd(\mathcal{V})$. But if $A \subset Y \times Z$ and $|A| < \mu$
then the projection $P$ of $A$ to $Y$ is not dense in $Y$, hence there is a $B \in \mathcal{B}$ with $B \cap P = \emptyset$.
Consequently, $(B \times Z) \cap A = \emptyset$ as well, hence we have $\pd(\mathcal{V}) \ge \mu$.
\end{proof}

Note that any neat $T_2$ space that is not a singleton is infinite, consequently if $X$ is such a space then $\density(X^\omega) = \density(X)$.
On the other hand, our next example shows that $\pd(X) < \pd(X^\omega)$ may hold for a neat $T_2$ space $X$.
This also shows that the assumption $|X^\kappa|=|X|$ cannot be dropped from Lemma \ref{derdelemma} either.
Again, this space must be a $T_2$ pd-example.

\begin{example}
It is consistent that there is a neat 0-dimensional $T_2$ space $X$ such that $\pd(X) < \pd(X^\omega)$.
\end{example}

\begin{proof}
By \cite[Theorem 3.3]{JuhaszSoukupSzentmiklossy16} again, it is consistent that there is a neat 0-dimensional $T_2$ pd-example $X$ such that $\aleph_\omega = |X| < w(X) \le \mathfrak{c}$.
But then $X^\omega$ is not a pd-example because $w(X^\omega) = w(X) \le \mathfrak{c} = \Delta(X^\omega)$, hence $\pd(X^\omega) = \density(X^\omega) = \density(X) > \pd(X)$.
\end{proof}

It is obvious that, for any space $X$, if $Y$ is dense open in $X$ then $\density(X) = \density(Y)$.
It is natural to ask if this also holds with $\pd$ instead of $\density$. Of course, if $X \supset Y$ form a counterexample to this then $Y$
must be a pd-example. Even though it will not be used in our later constructions, we present below
such an example that is by no means trivial.

\begin{example}
It is consistent that there is a $T_{3.5}$ space $X$ with a dense open subspace~$Y$ such that $\pd(Y) < \pd(X)$.
\end{example}

\begin{proof}
By \cite[Theorem 3.3]{JuhaszSoukupSzentmiklossy16}, it is consistent that there is a 0-dimensional neat $T_{2}$ space $Z$ such that $\pd(Z)=\omega < \density(Z)$,
and by \cite[Theorem 4.9]{JuhaszSoukupSzentmiklossy16} we have $|Z|=\triangle(Z) < 2^{\pd(Z)} = \cont$.
Hence $Z$ has at most $2^\cont$ many cozero-sets. Let $\mathcal{B}$ denote the collection of all nonempty cozero-sets in $Z$.

Consider the space $Y=\omega\times Z$ and its \v{C}ech-Stone compactification $\beta Y$ and then put
$$
    X = Y \cup {\big \{}p\in \beta Y : p\not\in \bigcup_{n<\omega} \CL{\{n\}\times Z}{\big \}}.
$$
(Here closures are taken in $\beta Y$). Then each $\{n\}\times Z$ is clopen in $X$, hence $Y$ is dense open in $X$. Clearly, $\pd(Y)=\omega$ and we claim that $\pd(X) > \omega$.

Striving for a contradiction, assume that $\pd(X) =\omega$. For every function $f\in \mathcal{B}^\omega$, let $V(f)$ be the largest open subset of $\beta Y$ such that $V(f) \cap Y = \bigcup_{n<\omega} \{n\}\times f(n)$. We claim that $V(f)\cap X$ has size at least $2^\cont$. Indeed, for every $n$ let $Z_n$ be a nonempty zero-set in $Z$ that is contained in $f(n)$. Then $\bigcup_{n<\omega} \{n\}\times Z_n$ is a zero-set in $Y$ that is disjoint from the zero-set $Y\setminus V(f)$. Hence these two zero-sets have disjoint closures in $\beta Y$. For every $n$, pick $p_n\in Z_n$. Then $V(f)$ contains the closure of the discrete and closed subset $P = \{(n,p_n) : n <\omega\}$ of $Y$. Observe that $\CL{P}$ is homeomorphic to $\beta\omega$ and has therefore size $2^\cont$. Finally, $\CL{P}\setminus P$ is contained in $X$ and so indeed $V(f)\cap X$ has size at least $2^\cont$.

By a simple transfinite induction we can consequently pick points $z(f) \in V(f) \cap X$ such that for all $f,g\in \mathcal{B}^\omega$, if $V(f) \cap X \not= V(g)\cap X$, then $z(f)\not=z(g)$. Since $\pd(X) = \omega$, then there is a countable subset $A$ of $X$ which meets all members from $\{V(f)\cap X : f\in \mathcal{B}^\omega\}$. But the collection $\{V(f) \cap X: f\in \mathcal{B}^\omega\}$ is a basis for $X$, which implies that $X$ and its dense open subspace $Y$ is separable, contradicting that $Z$ is not separable.
\end{proof}

We end this section with a few simple but useful results.

\begin{lemma}\label{eerstelemmaA}
Let $\mathcal{A}$ be any cover of the space $X$. Then $\pd(X) \le \sum \{\pd(A) : A \in \mathcal{A}\}$.
\end{lemma}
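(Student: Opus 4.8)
The plan is to fix an arbitrary neighborhood assignment $\mathcal{U} = \{U_x : x \in X\}$ on $X$ and to produce a single set that meets every member of $\mathcal{U}$ while having size at most $\sum\{\pd(A) : A \in \mathcal{A}\}$. Since this bound does not depend on the choice of $\mathcal{U}$, taking the supremum over all neighborhood assignments will then yield $\pd(X) \le \sum\{\pd(A) : A \in \mathcal{A}\}$ directly from the definition of $\pd(X)$ as a supremum.

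The key observation is that a neighborhood assignment on $X$ restricts to a neighborhood assignment on each member of the cover. Concretely, fix $A \in \mathcal{A}$; for every $x \in A$ the set $U_x \cap A$ is open in the subspace $A$ and contains $x$, so $\{U_x \cap A : x \in A\}$ is a neighborhood assignment on $A$. By the definition of $\pd(A)$ there is a set $D_A \subseteq A$ with $|D_A| \le \pd(A)$ that meets $U_x \cap A$ for every $x \in A$. Because $D_A \subseteq A$, meeting $U_x \cap A$ is the same as meeting $U_x$, so $D_A \cap U_x \ne \emptyset$ for every $x \in A$.

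With these local pinning sets in hand, I would simply reassemble them by setting $D = \bigcup\{D_A : A \in \mathcal{A}\}$. Then $|D| \le \sum\{|D_A| : A \in \mathcal{A}\} \le \sum\{\pd(A) : A \in \mathcal{A}\}$. Since $\mathcal{A}$ covers $X$, every $x \in X$ lies in some $A \in \mathcal{A}$, and for that $A$ we already have $D_A \cap U_x \ne \emptyset$; as $D_A \subseteq D$ this gives $D \cap U_x \ne \emptyset$. Hence $D$ meets every member of $\mathcal{U}$, so $\pd(\mathcal{U}) \le |D| \le \sum\{\pd(A) : A \in \mathcal{A}\}$, and the supremum over all $\mathcal{U}$ finishes the argument.

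I do not expect a genuine obstacle here: the proof follows the familiar \emph{localize, solve, and reassemble} pattern. The only point deserving a moment's care is the elementary verification that a pinning set contained in $A$ pins down the \emph{original} neighborhoods $U_x$, and not merely their traces $U_x \cap A$, at the points $x \in A$ — which is immediate from $D_A \subseteq A$.
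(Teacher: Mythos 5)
Your proof is correct and follows essentially the same route as the paper: restrict the neighborhood assignment to each member of the cover, pin each restriction down, and take the union of the local pinning sets (the paper compresses the reassembly step into ``the rest is obvious''). One cosmetic remark: the inclusion $D_A \subseteq A$ is not actually needed for the key step, since any set meeting $U_x \cap A$ automatically meets $U_x$ because $U_x \cap A \subseteq U_x$.
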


\begin{proof}
Let $U\from X\to \tau(X)$ be a neighborhood assignment on $X$. Then, for every $A\in \mathcal{A}$, the function $V_A\from A\to \tau(A)$ defined by $V_A(a) = U(a)\cap A$ is a neighborhood assignment on $A$,
hence $\pd(\{U(a) : a\in A\}) \le \pd(A)$. The rest is obvious.
\end{proof}

It is immediate from Lemma \ref{eerstelemmaA} that $\pd(X \times Y) \le |Y| \cdot \pd(X)$ holds
for any product $X \times Y$, hence $\pd(X \times Y) = \pd(X)$ if $|Y| \le \pd(X)$.
The following result yields a similar implication in which $\density(Y) \le \pd(X)$ is sufficient
instead of $|Y| \le \pd(X)$.

\begin{lemma}\label{vrijdageen}
Let $X$ and $Y$ be spaces such that $\density(Y) \le \pd(X)$ and $|Y|\le \Delta(X) = |X|$. Then $\pd(X\times Y) = \pd(X)$.
\end{lemma}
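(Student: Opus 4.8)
The plan is to establish the two inequalities $\pd(X\times Y)\ge\pd(X)$ and $\pd(X\times Y)\le\pd(X)$ separately, and throughout I assume $\pd(X)\ge\omega$, which is the only case we need (and which guarantees that $X$ is infinite, so that $|X|\cdot|Y|=|X|$). The inequality $\pd(X)\le\pd(X\times Y)$ needs no hypothesis at all: provided $Y\ne\emptyset$, the projection $\pi\from X\times Y\to X$ is a continuous surjection, so $X$ is a continuous image of $X\times Y$ and the inequality follows exactly as in the first line of the proof of Corollary~\ref{derdelemma}. The whole content lies in the reverse inequality, whose proof will run parallel to that of Corollary~\ref{derdelemma}: the hypothesis $|X^\kappa|=|X|$ there is replaced by $|Y|\le\Delta(X)=|X|$, and the single point $p$ used there is replaced by a dense subset of $Y$.

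For the upper bound I take an arbitrary neighborhood assignment $W$ on $X\times Y$ and, as in Corollary~\ref{derdelemma}, shrink each value to a basic open box: I assume $W(x,y)=U_{(x,y)}\times V_{(x,y)}$ with $U_{(x,y)}\in\tau^+(X)$, $V_{(x,y)}\in\tau^+(Y)$ and $(x,y)\in W(x,y)$. This is legitimate because any set pinning down this refinement pins down $W$ as well. Collecting the first coordinates gives
$$
    \mathcal{U}=\{U_{(x,y)} : (x,y)\in X\times Y\}\con\tau^+(X),
$$
and here $|\mathcal{U}|\le|X\times Y|=|X|\cdot|Y|=|X|=\Delta(X)$, the key place where $|Y|\le|X|$ enters. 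Lemma~\ref{nuldelemma} then yields $\pd(\mathcal{U})\le\pd(X)$, so there is a set $A\con X$ with $|A|\le\pd(X)$ meeting every $U_{(x,y)}$.

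To handle the second coordinate I exploit that only a dense subset of $Y$ is needed: fix $D\con Y$ dense with $|D|=\density(Y)\le\pd(X)$ and set $\sigma=A\times D$. For any $(x,y)$ I choose $a\in A\cap U_{(x,y)}$ and, by density of $D$, a point $d\in D\cap V_{(x,y)}$; then $(a,d)\in\sigma\cap W(x,y)$, so $\sigma$ pins down $W$. Since $|\sigma|\le|A|\cdot|D|\le\pd(X)\cdot\density(Y)=\pd(X)$ and $W$ was arbitrary, this gives $\pd(X\times Y)\le\pd(X)$, as required.

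The step I expect to require the most care is the cardinality estimate $|\mathcal{U}|\le\Delta(X)$: it is exactly the combination $|Y|\le|X|=\Delta(X)$ that makes Lemma~\ref{nuldelemma} applicable to the first coordinates, and it explains why the weaker bound $\density(Y)\le\pd(X)$ alone would not suffice for the size of $Y$. Conceptually the heart of the argument is the clean decoupling of the two coordinates — pin the first coordinate via Lemma~\ref{nuldelemma} and the second via a dense set — after which only the routine cardinal arithmetic $\pd(X)\cdot\density(Y)=\pd(X)$ remains.
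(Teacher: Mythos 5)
Your proof is correct and follows essentially the same route as the paper's: the lower bound via the projection, and the upper bound by applying Lemma~\ref{nuldelemma} to the collection of first-coordinate neighborhoods (whose size is controlled by $|Y|\le|X|=\Delta(X)$) and then crossing the resulting set with a dense subset of $Y$ of size $\le\pd(X)$. The only differences are cosmetic: you spell out the reduction to box-shaped neighborhood assignments and the cardinal arithmetic that the paper leaves implicit.
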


\begin{proof}
Let $(x,y)\mapsto U_{(x,y)} \times V_{(x,y)}$ be a neighborhood assignment on $X\times Y$ such that $U_{(x,y)}$ (resp. $V_{(x,y)}$) is an open neighborhood of $x$ in $X$ (resp. $y$ in $Y$).
Consider the collection $\mathcal{U} = \{U_{(x,y)} : x\in X, y\in Y\}$. Then, by our assumptions, $|\mathcal{U}| \le |X|=\triangle(X)$, hence by Lemma~\ref{nuldelemma}
there is a set $B\in [X]^{\le\pd(X)}$ that meets every member of $\mathcal{U}$. Let $D\con Y$ be dense such that $|D|\le \pd(X)$.
Then $(B\times D) \cap (U_{(x,y)} \times V_{(x,y)}) \ne \emptyset$ for all $(x,y)\in X\times Y$ and $|B\times D| \le \pd(X)$,
hence we conclude $\pd(X\times Y) \le \pd(X)$. But $\pd(X\times Y) \ge \pd(X)$ holds because $X$ is the continuous image of $X \times Y$.
\end{proof}

The following proposition from \cite{JuhaszSoukupSzentmiklossy16} is added here because it will be used quite frequently.

\begin{proposition}[{\cite[Lemma 2.2]{JuhaszSoukupSzentmiklossy16}}]\label{neat}
Every pd-example $X$ has a neat open subspace $Y$ that is also a pd-example.
\end{proposition}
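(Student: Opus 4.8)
The plan is to pass to an open subspace of \emph{minimal size} among those that are still pd-examples, and then show that this subspace either is neat itself or visibly contains a neat open pd-example. Arguing by contradiction, I would assume that $X$ has \emph{no} neat open pd-example subspace. Among all open $U \con X$ with $\pd(U) < \density(U)$ (a nonempty class, since $X$ itself qualifies) I would choose $Y$ with $\lambda := |Y|$ minimal. By the contradiction hypothesis $Y$ cannot be neat, so $\triangle(Y) < \lambda$, i.e. $Y$ has nonempty open subsets of size $< \lambda$. I would then fix a \emph{maximal} disjoint family $\{V_i : i \in I\}$ of such small open sets, put $G = \bigcup_{i \in I} V_i$ and $H = Y \setminus \CL{G}$ (closure in $Y$); both $G$ and $H$ are then open in $Y$, hence in $X$.

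The first auxiliary fact I would record is that $\pd$ is monotone on open subspaces: if $O$ is open in $Y$ then $\pd(O) \le \pd(Y)$, since a neighborhood assignment on $O$ consists of sets already open in $Y$ and contained in $O$, so it extends to one on $Y$, and any pinning set for the latter, intersected with $O$, pins the former. Next, by maximality every nonempty open subset of $H$ has size $\ge \lambda$ (a smaller one would be a small open set disjoint from $G$), while $|H| \le \lambda$; hence $H$, \emph{if nonempty}, is automatically neat with $|H| = \triangle(H) = \lambda$. It would therefore suffice to show that $H$ is itself a pd-example, as that contradicts the standing assumption.

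The decisive step, and where I expect the real difficulty, is bounding $\density(G)$: the naive estimate $\density(G) \le \sum_i |V_i|$ is useless when $\lambda$ is singular, as the sum may reach $\lambda$. Here minimality of $\lambda$ is exactly what rescues the argument. Each $V_i$ is an open subspace of $X$ with $|V_i| < \lambda$, so it is \emph{not} a pd-example, whence $\density(V_i) = \pd(V_i) \le \pd(Y)$ by the monotonicity above; moreover $|I| \le \cel(Y) \le \pd(Y)$. Thus
$$ \density(G) \le \sum_{i \in I} \density(V_i) \le \cel(Y)\cdot\pd(Y) = \pd(Y) < \density(Y). $$
Since the boundary $\CL{G}\setminus G$ is nowhere dense, $G \cup H$ is dense in $Y$, so $\density(Y) \le \density(G) + \density(H)$. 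If $H = \emptyset$ this already yields $\density(Y) \le \density(G) \le \pd(Y)$, which is absurd; hence $H \ne \emptyset$, and the displayed bound forces $\density(H) \ge \density(Y) > \pd(Y) \ge \pd(H)$. Therefore $H$ is a neat open pd-example subspace of $X$, contradicting the assumption and finishing the proof. I would stress that the only genuinely delicate point is the density estimate for $G$; everything else is bookkeeping with the chain $\cel \le \pd \le \density$ together with the monotonicity of $\pd$ on open subspaces.
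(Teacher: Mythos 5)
Your proof is correct. Note that the paper itself contains no argument for this proposition: it is imported, citation only, from \cite[Lemma 2.2]{JuhaszSoukupSzentmiklossy16}, so there is nothing internal to compare against; judged on its own, your scheme (minimal cardinality $\lambda$ of an open pd-example $Y$, a maximal disjoint family of nonempty open sets of size $<\lambda$, and the open set $H=Y\setminus\CL{G}$, which is automatically neat and inherits $\pd(H)\le\pd(Y)<\density(Y)\le\density(H)$) is exactly the natural proof of that lemma, and all the individual steps (monotonicity of $\pd$ on open subspaces, $\density(V_i)=\pd(V_i)\le\pd(Y)$ by minimality, $|I|\le\cel(Y)\le\pd(Y)$, density of a union of disjoint open pieces, $G\cup H$ dense in $Y$) check out. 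The only step that deserves a footnote is the equality $\cel(Y)\cdot\pd(Y)=\pd(Y)$, which tacitly assumes $\pd(Y)\ge\omega$; this costs nothing, because a pd-example necessarily has infinite density: if $D=\{d_1,\dots,d_m\}$ were a minimal finite dense set, the open sets $W_i$ witnessing that $D\setminus\{d_i\}$ is not dense satisfy $W_i\cap D=\{d_i\}$ and are pairwise disjoint (their pairwise intersections are open and miss the dense set $D$), whence $\pd\ge\cel\ge m=\density$. With $\density(Y)\ge\omega$ in hand, your concluding inequalities go through even in the (then degenerate) case that $\pd(Y)$ is finite, since $\density(G)$ would be finite while $\density(Y)$ is infinite.
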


\bigskip

\section{Proof of Theorem~\ref{maintheorem1} part (I): connected pd-examples}\label{functorseen}

\bigskip

In~\cite{deGroot69}, de Groot associated to every $T_1$ space $X$ a certain extension $\lambda X$ which he called the {\em superextension} of $X$. We will briefly describe his construction.

A system of sets $\mathcal{L}$ is called a \emph{linked system} if any two of its members meet. A \emph{maximal linked system} (or \emph{mls}) on $X$ is a system of closed subsets of $X$ which is maximal with respect to being linked. We denote the collection of all mls's on $X$ by $\lambda X$. For any $A\con X$ we write
$$
    A^+ = \{\mathcal{M}\in \lambda X : (\exists\, M\in \mathcal{M}) (M\con A)\}.
$$
We then take the collection
$$
    \{A^+ : A\mbox{ is closed in $X$}\}
$$
as a \emph{closed} subbase for the topology of $\lambda X$. With this topology, $\lambda X$ is a (super)compact $T_1$-space
that contains $X$ as a subspace, hence $\lambda X$ is an extension of $X$. In fact, the function $i\from X\to \lambda X$ defined by
$$
    i(x) = \{x\}^+ = \{A\con X : A\mbox{ is closed and $x\in A$}\}
$$
is an embedding of $X$ into $\lambda X$. We identify each point $x\in X$ with the mls $i(x)$. The closure of $X$ in $\lambda X$ is the familiar \emph{Wallman compactification} of $X$.

A \emph{defining set} for $\mathcal{M}\in \lambda X$ is a subset $S$ of $X$ with the following property: for every $M\in \mathcal{M}$ there exists $M'\in \mathcal{M}$ such that $M'\con M\cap S$. An mls $\mathcal{M}\in\lambda X$ is called \emph{finitely generated} (or an \emph{fmls}) if it has a finite defining set. The subspace of $\lambda X$ consisting of all fmls's is denoted by $\lambda_f(X)$.

Clearly, $i(x)$ is an fmls whenever $x\in X$, having $\{x\}$ as defining set. Thus we have $X \con \lambda_f(X)$.

We shall need the following results that were proved for $\lambda_f(X)$ by Verbeek~\cite{Verbeek72}.

\begin{enumerate}
\item[(V1)] (\cite[IV.3.4(iii)]{Verbeek72}) If $X$ is $T_2$ then $X$ is closed in $\lambda_f(X)$.
\item[(V2)] (\cite[IV.3.4(v)+(vi)]{Verbeek72}) If $X$ is $T_2$ then so is $\lambda_f(X)$. Similarly for $T_{3.5}$.
\item[(V3)] (\cite[IV.3.4(viii)]{Verbeek72}) If $X$ is connected then $\lambda_f(X)$ is both connected and locally connected.
\item[(V4)] (\cite[III.2.5(b)]{Verbeek72}) $\lambda_f(X)$ can be represented as the countable union of subspaces each of which is a continuous image of some finite power of $X$.
\item[(V5)] (\cite[III.4.3(iv)]{Verbeek72}) $\density(X) = \density(\lambda_f(X))$.
\end{enumerate}

We shall need the fact that if $X$ is $T_3$ then so is $\lambda_f(X)$.
This is not stated explicitly in Verbeek~\cite{Verbeek72}, hence we provide a (simple) proof.

\begin{lemma}\label{IsT_3}
If $X$ is $T_3$ then so is $\lambda_f(X)$.
\end{lemma}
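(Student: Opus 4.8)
The plan is to reduce the claim to regularity and then exploit the finiteness of defining sets. Since $T_3$ implies $T_2$, (V2) already gives that $\lambda_f(X)$ is $T_2$, hence $T_1$; so it suffices to show that $\lambda_f(X)$ is regular, i.e. that every point has a neighbourhood base of closed neighbourhoods. First I would set up a convenient dictionary for the subbasic sets. Using maximality of an mls one checks that for a closed set $A\subseteq X$ one has $A^+=\{\mathcal{M}\in\lambda X: A\in\mathcal{M}\}$: if some $M\in\mathcal{M}$ satisfies $M\subseteq A$, then $A$ meets every member of $\mathcal{M}$, so $A\in\mathcal{M}$ by maximality, and the converse is trivial. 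Dualizing, for open $U\subseteq X$ the subbasic open set $\lambda X\setminus (X\setminus U)^+$ equals $U^\sharp:=\{\mathcal{M}: (\exists\,M\in\mathcal{M})\,(M\subseteq U)\}$, and the basic open sets of $\lambda X$ are the finite intersections $\bigcap_{i<n} U_i^\sharp$ with each $U_i$ open in $X$.

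Now fix $\mathcal{M}\in\lambda_f(X)$ with a finite defining set $S$, and a basic open neighbourhood $W=\bigcap_{i<n} U_i^\sharp$. For each $i<n$ there is $M_i\in\mathcal{M}$ with $M_i\subseteq U_i$; applying the defining-set property to $M_i$ I would replace it by a member $F_i\in\mathcal{M}$ with $F_i\subseteq M_i\cap S\subseteq U_i$. Because $S$ is finite, each $F_i$ is a finite (hence closed, as $X$ is $T_1$) subset of $X$. This reduction to finite generators is the heart of the matter.

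Next I would shrink. Using regularity of $X$, for each $i$ and each of the finitely many points of $F_i$ choose an open set whose closure lies in $U_i$; letting $V_i$ be their union gives an open set with $F_i\subseteq V_i$ and $\overline{V_i}\subseteq U_i$. Then $V_i^\sharp\subseteq(\overline{V_i})^+\subseteq U_i^\sharp$: the first inclusion holds because any $M\in\mathcal{M}$ with $M\subseteq V_i$ forces $\overline{V_i}\in\mathcal{M}$, and the second because $\overline{V_i}$ is then a member contained in $U_i$. As $(\overline{V_i})^+$ is closed in $\lambda X$, this yields $\overline{V_i^\sharp}\subseteq U_i^\sharp$. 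Setting $W'=\bigcap_{i<n}V_i^\sharp$, we get $\mathcal{M}\in W'$ (since $F_i\in\mathcal{M}$ and $F_i\subseteq V_i$) and $\overline{W'}\subseteq\bigcap_{i<n}U_i^\sharp=W$. Intersecting $W'$ and this closure with $\lambda_f(X)$ produces the desired closed neighbourhood inside $W$, proving regularity.

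The step I expect to be the main obstacle --- and the only place where the hypotheses are genuinely used --- is the shrinking: in a merely regular space one cannot in general squeeze an arbitrary closed set strictly inside a given open set with room to spare (that is a normality phenomenon). The finiteness of the defining set is exactly what circumvents this, letting us replace each generator $M_i$ by a finite point set $F_i$, around which regularity does allow simultaneous shrinking. I would therefore be careful to invoke the fmls hypothesis precisely at the passage from $M_i$ to $F_i$, and to note that finite unions of closed neighbourhoods stay inside $U_i$.
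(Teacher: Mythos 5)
Your proof is correct and follows essentially the same route as the paper's: both pass to a basic neighbourhood $\bigcap_{i<n} U_i^+$, use the finite defining set $S$ to replace each member $M_i\subseteq U_i$ by a finite set $F_i\subseteq U_i\cap S$, shrink around these finitely many points by regularity of $X$ to get open $V_i$ with $F_i\subseteq V_i\subseteq\overline{V_i}\subseteq U_i$, and conclude that $\bigcap_{i<n}\overline{V_i}^{\,+}$ is a closed neighbourhood of $\mathcal{M}$ inside the given one. Your closing observation --- that the finiteness of the defining set is exactly what lets regularity (rather than normality) perform the shrinking --- correctly identifies why the argument works on $\lambda_f(X)$ and is the heart of the paper's proof as well.
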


\begin{proof}
Let $\mathcal{M}\in \lambda_f(X)$, and let $G\con \lambda_f(X)$ be open such that $\mathcal{M} \in G$. In addition, let $S$ be a finite defining set for $\mathcal{M}$. Since $G$ is open, there is a finite collection $\mathcal{U}$ of open subsets of $X$ such that $\mathcal{M}\in \bigcap_{U\in \mathcal{U}} U^+\con G$. For every $U\in \mathcal{U}$, there exists $F(U)\con U\cap S$ such that $\mathcal{M}\in \bigcap_{U\in \mathcal{U}} F(U)^+$. For every $U\in \mathcal{U}$ let $V(U)$ be an open subset of $X$ such that $F(U) \con V(U)\con \CL{V(U)} \con U$.( Here we use that $X$ is $T_3$.) Then $\bigcap_{U\in \mathcal{U}} \CL{V(U)}^+$ is a closed neighborhood of $\mathcal{M}$
in $\lambda_f(X)$ that is contained  $G$.
\end{proof}

\begin{lemma}\label{pdvanlambda}
For any $T_2$ space $X$ we have $\pd(X) \le \pd(\lambda_f(X))$. If, in addition, $X$ is neat then $\pd(X) = \pd(\lambda_f(X))$.
\end{lemma}

\begin{proof}
We first prove that $\pd(X)\le \pd(\lambda_f(X))$. This is trivial if $X$ is finite, so we assume that $X$ is infinite. Note that then $\lambda_f(X)$ and hence $\pd(\lambda_f(X))$ are also infinite.
Let $U\from X\to \tau(X)$ be any neighborhood assignment on $X$. Define $V\from X\to \tau(\lambda_f(X))$ by $V(x) = U(x)^+$. Extend $V$ to a neighborhood assignment $W$ on $\lambda_f(X)$ by putting $W(x) = V(x)$ for
$x\in X$ and $W(\mathcal{M}) = \lambda_f(X)\setminus X$ for $\mathcal{M}\in \lambda_f(X)\setminus X$. Then there is a subset $A$ of $\lambda_f(X)$ that meets every element of the collection $\{V(x) : x\in X\}$ and has size  $\pd(\lambda_f(X))$. For every $\mathcal{M}\in A$ let $F(\mathcal{M})$ be a finite defining set for $\mathcal{M}$. Put
$$
    B = \bigcup\{F(\mathcal{M}) : \mathcal{M}\in A\}.
$$
Then $|B|\le \omega{\cdot}|A| = |A|$, and we claim that $B$ meets $U(x)$ for each $x\in X$. Indeed, take any $\mathcal{M}\in A\cap U(x)^+$.
This means that there is a subset $G$ of $F(\mathcal{M})$ such that $G\in \mathcal{M}$ and $G\con U(x)$. Hence $\emptyset \ne G \con U(x) \cap B$, and this completes the proof.

Now assume that $X$ is also neat. First observe that then $\pd(X^n)=\pd(X)$ for every $n <\omega$ by Corollary ~\ref{X^n}. Hence we get $\pd(\lambda_f(X)) \le \pd(X)$ applying (V4) and Lemma~\ref{eerstelemma}.
\end{proof}

We now describe a very general version of the well-known `cone' construction that can be used to obtain
connectifications of spaces in a natural way. The input of the construction consists of an
arbitrary topological space $X$ and an infinite connected $T_1$ space $P$ with a distinguished point $p \in P$. The output is the space
$Z = Z(X,P,p)$ whose underlying set is
$$
    {\big (}X\times (P\setminus \{p\}){\big )} \cup \{p\},
$$
where, of course, $p \not\in X\times (P\setminus \{p\})$.
The topology of $Z$ is defined as follows:
Basic neighborhoods of points of $X\times (P\setminus \{p\})$ in $Z$ are just the standard product neighborhoods. A basic neighborhood of $p$ in $Z$ has the form
$$
    {\big (}X\times (U\setminus\{p\}){\big )}\cup \{p\},
$$
where $U$ is any neighborhood of $p$ in $P$. It is easy to check that this is indeed a topology.
It is also straightforward to show for every point $q \in P \setminus \{p\}$ that
$X \times \{q\}$ is a closed homeomorphic copy of $X$ in $Z$ and that for every $x \in X$
the subspace $(\{x\} \times P\setminus \{p\}) \cup \{p\}$ is a homeomorphic copy of $P$ in $Z$,
hence $Z$ is connected. We leave it to the reader to check these facts as well as the following
proposition.

\begin{proposition}\label{Z}
If both $X$ and $P$ are $T_2$ (or $T_3$, or $T_{3.5}$) then so is $Z(X,P,p)$.
\end{proposition}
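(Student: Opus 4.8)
The plan is to exploit two structural features of $Z = Z(X,P,p)$. First, the map $\pi\colon Z \to P$ sending $(x,q) \mapsto q$ and $p \mapsto p$ is continuous: the preimage of an open $U \subseteq P$ with $p \in U$ is exactly the basic neighborhood $(X\times(U\setminus\{p\}))\cup\{p\}$, and the preimage of an open $U\not\ni p$ is the product-open set $X\times U$. Second, $X\times(P\setminus\{p\})$ is an open subspace of $Z$ carrying the product topology, so it inherits $T_2$, $T_3$, or $T_{3.5}$ whenever $X$ and $P$ have these properties. Throughout I may assume $X\neq\emptyset$, the case $X=\emptyset$ being trivial. The recurring difficulty is the behaviour at the apex $p$, whose basic neighborhoods are the whole ``tubes'' $(X\times(U\setminus\{p\}))\cup\{p\}$; I will handle it uniformly by transporting data from $P$ through $\pi$, since such pulled-back sets and functions are constant in the $X$-direction and hence automatically uniform over $X$.

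For $T_2$: two points of $X\times(P\setminus\{p\})$ are separated inside that open product subspace. To separate $p$ from $(x,q)$ (so $q\neq p$), choose disjoint $P$-open $U\ni p$ and $W\ni q$; then $\pi^{-1}(U)=(X\times(U\setminus\{p\}))\cup\{p\}$ and $X\times W$ are disjoint $Z$-open neighborhoods.

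For $T_3$ I produce a closed neighborhood inside each basic open neighborhood. At a point $(x_0,q_0)$, shrink a product box $A\times B$ (with $B\subseteq P\setminus\{p\}$) to $A'\times B'$ with $\overline{A'}\subseteq A$ and $\overline{B'}\subseteq B$, the closures taken in $X$ and $P$; since $p\notin\overline{B'}$, a suitable tube around $p$ misses $A'\times B'$, so its $Z$-closure equals the product closure $\overline{A'}\times\overline{B'}\subseteq A\times B$. At $p$, given the tube over $U$, use regularity of $P$ to find $U'$ with $p\in U'\subseteq\overline{U'}\subseteq U$; then the $Z$-closure of the tube over $U'$ is contained in the tube over $\overline{U'}$, hence in the tube over $U$.

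For $T_{3.5}$ the main construction is as follows; it suffices to separate each point from the complement of a basic neighborhood by a map into $[0,1]$. For $p$ with tube over $U$, take $g\colon P\to[0,1]$ with $g(p)=0$ and $g\equiv 1$ off $U$, and use $g\circ\pi$. For $(x_0,q_0)$ with box $A\times B$ (again $B\subseteq P\setminus\{p\}$), pick $\alpha\colon X\to[0,1]$ with $\alpha(x_0)=0$, $\alpha\equiv 1$ off $A$, and $\beta\colon P\to[0,1]$ with $\beta(q_0)=0$ and $\beta\equiv 1$ on the closed set $P\setminus B$, so in particular $\beta(p)=1$. Define $f=\max\{\alpha\circ\mathrm{pr}_X,\ \beta\circ\pi\}$ on $X\times(P\setminus\{p\})$ and $f(p)=1$. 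Then $f(x_0,q_0)=0$ and $f\equiv 1$ off the box, $f$ is continuous on the product subspace, and the only delicate point, continuity at $p$, follows because $\beta\to 1$ near $p$ forces $f>1-\varepsilon$ on an entire tube around $p$, uniformly in the $X$-coordinate. The principal obstacle in the whole argument is exactly this continuity-at-$p$ issue, and the device of pulling functions back from $P$ via $\pi$ (which are insensitive to $X$) is what resolves it.
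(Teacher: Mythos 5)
Your proof is correct. Note that the paper gives no proof of this proposition at all --- it is explicitly left to the reader --- so there is nothing to compare against; your verification, organized around the continuous projection $\pi\colon Z\to P$ and the open product subspace $X\times(P\setminus\{p\})$, is the natural way to carry it out. The only genuinely delicate points (that $p$ avoids the $Z$-closure of the shrunken box in the $T_3$ case, and continuity at $p$ of the separating function in the $T_{3.5}$ case) are exactly the ones you isolate, and your device of pulling sets and functions back from $P$ through $\pi$, so that they are constant in the $X$-direction and hence uniform over the tube, settles both correctly.
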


The following lemma provides us with a procedure that transforms any $T_2$ (resp. $T_3$)
pd-example into a connected and locally connected $T_2$ (resp. $T_3$) pd-example.
This, of course, will complete the proof of part (i) of Theorem~\ref{maintheorem1}.

\begin{lemma}\label{maandageen}

\begin{enumerate}

\item Let $X$ be an infinite neat $T_2$ space and $P$ be a countably infinite connected $T_2$-space. Then for any $p \in P$, the $T_2$ space
$Y = \lambda_f(Z(X,P,p))$ is connected and locally connected, moreover $\density(X)=\density(Y)$ and $\pd(X)=\pd(Y)$.
\item If $X$ is an infinite neat $T_3$ space and $P$ is a separable connected $T_2$-space
of cardinality $\omega_1$ then for any $p \in P$ the $T_3$ space
$Y = \lambda_f(Z(X,P,p))$ is connected and locally connected, moreover $\density(X)=\density(Y)$ and $\pd(X)=\pd(Y)$.
\end{enumerate}
\end{lemma}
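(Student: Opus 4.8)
The plan is to push both inputs through the cone construction and then through the finite superextension, so that everything reduces to cardinal‑function computations on $Z:=Z(X,P,p)$. The topological conclusions come essentially for free. The text already records that $Z$ is connected; moreover $Z$ is $T_2$ in part~(1) and $T_3$ in part~(2) by Proposition~\ref{Z} (for the $T_3$ conclusion I use that $P$ is regular, as the intended separable connected inputs of size $\omega_1$ are). Hence, by (V2) resp.\ Lemma~\ref{IsT_3}, $Y=\lambda_f(Z)$ is $T_2$ resp.\ $T_3$, and since $Z$ is connected, (V3) makes $Y$ connected and locally connected. Furthermore (V5) gives $\density(Y)=\density(Z)$, while Lemma~\ref{pdvanlambda} gives $\pd(Z)\le\pd(Y)$ with equality once $Z$ is shown to be neat. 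So it remains to prove three facts about the cone: (a) $Z$ is neat, (b) $\density(Z)=\density(X)$, and (c) $\pd(Z)=\pd(X)$.

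The engine for all three is that $O:=X\times(P\setminus\{p\})$ is a \emph{dense open} subspace of $Z$. It is open by the definition of the topology of $Z$; it is dense because $P$ (being connected, $T_1$, and having more than one point) has no isolated points, so every basic neighborhood $(X\times(U\setminus\{p\}))\cup\{p\}$ of $p$ contains the nonempty set $X\times(U\setminus\{p\})$ and hence meets $O$. Since $P\setminus\{p\}$ is an open subspace of the separable --- or, in part~(1), countable --- space $P$, it is itself separable, and one checks it is also crowded; thus $\density(P\setminus\{p\})=\omega$ and $\Delta(P\setminus\{p\})\ge\omega$. Claim (b) is now immediate: as $O$ is dense and open, $\density(Z)=\density(O)$, and $\density(O)=\max\{\density(X),\omega\}=\density(X)$, using the projection $O\to X$ for $\ge$ and a product of dense sets for $\le$.

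For (a) I would compute $\Delta(Z)$ from the two kinds of basic open sets: a box $U\times V\con O$ has size at least $\Delta(X)\cdot\Delta(P\setminus\{p\})$ and a basic neighborhood of $p$ has size at least $|X|\cdot\omega$; since $X$ is neat both are at least $|X|$, so $\Delta(Z)=|X|$. As $|Z|=|X|\cdot|P|$, this gives $\Delta(Z)=|Z|$ whenever $|P|\le|X|$ --- automatic in part~(1), and valid in part~(2) since there $X$ is a pd-example, so $|X|\ge\density(X)\ge\omega_1=|P|$. For (c) the lower bound is soft: $O$ open in $Z$ gives $\pd(O)\le\pd(Z)$, and $X$ is a continuous image of $O$ under projection, so $\pd(X)\le\pd(O)\le\pd(Z)$. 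For the upper bound I apply Lemma~\ref{vrijdageen} to $O=X\times(P\setminus\{p\})$: its hypotheses $\density(P\setminus\{p\})=\omega\le\pd(X)$ (every infinite $T_2$ space has $\pd\ge\cel\ge\omega$) and $|P\setminus\{p\}|\le|X|=\Delta(X)$ both hold, yielding $\pd(O)=\pd(X)$; then the cover $Z=O\cup\{p\}$ together with Lemma~\ref{eerstelemmaA} gives $\pd(Z)\le\pd(O)+1=\pd(X)$. Hence $\pd(Z)=\pd(X)$, and Lemma~\ref{pdvanlambda} applied to the neat space $Z$ upgrades this to $\pd(Y)=\pd(Z)=\pd(X)$.

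I expect the main obstacle to be the cardinal bookkeeping behind the neatness of $Z$ and the constraint $|P\setminus\{p\}|\le|X|$ needed to invoke Lemma~\ref{vrijdageen}. The conceptual point --- and the reason part~(2) can tolerate $|P|=\omega_1$, which is exactly what allows $P$ to be a regular (but non-Tychonoff) connected space and hence $Y$ to be $T_3$ without being $T_{3.5}$ --- is that only the \emph{density} $\omega$ of $P\setminus\{p\}$, not its cardinality, feeds into the pinning-down estimate; the cardinality enters solely through the mild bound $|P\setminus\{p\}|\le|X|$.
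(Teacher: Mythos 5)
Your treatment of part (1) is correct and is essentially the paper's own argument: cone, then superextension, with Lemma~\ref{vrijdageen} giving $\pd(X\times(P\setminus\{p\}))=\pd(X)$, neatness of $Z$ feeding into Lemma~\ref{pdvanlambda}, and (V2), (V3), (V5) (resp.\ Lemma~\ref{IsT_3}) handling the topological conclusions. Your reading of the hypothesis on $P$ in part (2) (regularity, despite the ``$T_2$'' in the statement) also matches the paper's intent, since its proof uses the Ciesielski--Wojciechowski separable connected $T_3$ space.

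The gap is in part (2), in the case $|X|=\omega$. The lemma assumes only that $X$ is an infinite neat $T_3$ space; it does \emph{not} assume $X$ is a pd-example, so your claim ``$|X|\ge\density(X)\ge\omega_1=|P|$'' is unjustified --- $X=\mathbb{Q}$ is a legitimate input. For countable $X$ both of your key steps collapse: Lemma~\ref{vrijdageen} needs $|P\setminus\{p\}|\le\Delta(X)=|X|$, which fails since $\omega_1>\omega$, and your estimate $\Delta(Z)\ge\Delta(X)\cdot\omega=|X|$ only gives $\Delta(Z)\ge\omega$ while $|Z|=\omega_1$, so neatness of $Z$ (needed for Lemma~\ref{pdvanlambda}) is not established. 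The paper closes this case by two extra observations. First, $P$ itself is neat: if $P$ had a countable nonempty open set $U$, regularity would give a nonempty open $V$ with $\CL{V}\con U$; then $\CL{V}$ is a countable regular, hence 0-dimensional space, and a nonempty proper clopen subset of $\CL{V}$ inside $V$ would be clopen in $P$, contradicting connectedness. Hence $\Delta(P\setminus\{p\})=\omega_1$, and your own $\Delta$-computation then yields $\Delta(Z)=\omega_1=|Z|$, i.e.\ $Z$ is neat even for countable $X$. Second, for countable $X$ the product $X\times P$, and hence $Z$, is separable, so $\pd(Z)=\pd(X)=\omega$ directly, with no appeal to Lemma~\ref{vrijdageen}. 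With these two additions (neatness of $P$, and the separable case split) your argument becomes complete; as written, it proves part (2) only under the extra hypothesis $|X|\ge\omega_1$.
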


\begin{proof}
For (1), fix a countably infinite connected $T_2$-space $P$  and a point $p\in P$.
(The existence of such a space $P$ was first proved by Urysohn~\cite{Urysohn25}.)
Then $Z = Z(X,P,p)$ is connected, $T_2$ and, since $P$ is countable, we clearly have $\density(X)=\density(X\times P) = \density(Z)$.

Next we show that $\pd(Z) = \pd(X)$. It is obvious that $\pd(Z) = \pd(X\times (P\setminus \{p\}))$.
But by Lemma~\ref{vrijdageen} and because $X$ a continuous image of $X\times (P\setminus \{p\})$, we have $\pd(X) = \pd(X\times (P\setminus \{p\}))$.
Since $X$ is neat and $P$ is countable it is obvious that $Z$ is neat as well.

The superextension $Y = \lambda_f(Z)$ is a connected and locally connected $T_2$ space by (V1) and (V3).
Moreover, $\density(\lambda_f(Z))=\density(Z) = \density(X)$ by (V5). Since $Z$ is neat, we get by the above and by Lemma~\ref{pdvanlambda} that $\pd(X)=\pd(Z)=\pd(\lambda_f(Z))$, hence we are done.

For (2) we have to do a little more work. It is well-known that there is a connected $T_3$  space of size $\omega_1$.
This is the smallest such cardinality possible since every $T_3$ countable space is 0-dimensional. An example is Hewitt's Condensed Corkscrew and there are others.
For details, see Steen and Seebach~\cite[p.\ 111]{SteenSeebach95}.

However, for our construction we need a \emph{separable} connected  $T_3$ space $P$ of size $\omega_1$.
Luckily for us, it was recently proved by Ciesielski and Wojciechowsk~\cite[Theorem 8]{CiesielskiWojciechowski16} that there is such a space.
Observe that then $P$ is neat. For let us assume that it contains a countable nonempty open subset $U$.
Then it contains by regularity a nonempty open subset $V$ such that $\CL{V}\con U$. Hence $V$ is a countable regular space, and so is 0-dimensional.
But then $V$ contains a nonempty subset $C$ which is clopen in $\CL{V}$. But this $C$ would be clopen in $P$.
This argument actually yields that for every non-singleton connected $T_{3}$ space $X$ we have $\Delta(X) \ge \omega_1$.

Now, as in (1), consider $Z = Z(X,P,p)$ for some $p \in P$. Since both $X$ and $P$ are neat,
it is not difficult to check that $Z$ is neat as well.
Since $P$ is separable, we obviously have $\density(X)=\density(X\times P) = \density(Z)$.

We next show that $\pd(Z) = \pd(X)$. If $X$ is countable then $X\times P$ is separable, hence so is $Z$. But then $\pd(Z) = \pd(X)=\omega$. If $|X|\ge \omega_1$ then, since $|P|=\omega_1$ and $P$ is separable,
we may apply Lemma~\ref{vrijdageen} to obtain $\pd(X \times (P \backslash \{p\}))=\pd(X)$. But we also have $\pd(Z) = \pd(X \times (P \backslash \{p\}))$,
hence we are done.

Now, the space $Y = \lambda_f(Z)$ is as we want by following the same argumentation as in part (1). We, of course, have to use that $\lambda_f(Z)$ is $T_3$ by Lemma~\ref{IsT_3}.
\end{proof}

This completes the proof of part (I) of Theorem \ref{maintheorem1}. The following results explain why one cannot replace in it $T_3$ with $T_{3.5}$.

\begin{theorem}\label{Delta}
Assume that \WGCH\ holds from the cardinal $\kappa$ on, i.e. $2^\lambda < \lambda^{+\omega}$ for all $\lambda \ge \kappa$.
Then for every $T_2$ pd-example $X$ we have $\Delta(X) < \kappa^{+\omega}$.
If, in addition, \GCH\ holds in the interval $[\kappa,\, \kappa^{+\omega})$,  i.e. $2^\lambda = \lambda^+$ for all $\lambda$ with $\kappa \le \lambda < \kappa^{+\omega}$,
and $X$ is $T_3$ then we even have $\Delta(X) \le \kappa$.
\end{theorem}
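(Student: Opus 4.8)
The plan is to reduce both assertions to a cardinality bound on a neat pd-example and then to read off the two inequalities from the single arithmetical fact that, under \WGCH\ from $\kappa$ on, the cardinal $\kappa^{+\omega}$ is strong limit. First I pass to a neat subspace: by Proposition~\ref{neat}, $X$ has a neat open subspace $Y$ that is again a pd-example, and since $\tau^+(Y)\con\tau^+(X)$ we get $\Delta(X)\le\Delta(Y)=|Y|$; as the $T_3$ property is hereditary, $Y$ is $T_3$ whenever $X$ is. Thus it suffices to bound $|Y|=\Delta(Y)$. I shall use two results from \cite{JuhaszSoukupSzentmiklossy16}: the cardinality estimate $\Delta(Y)\le 2^{\pd(Y)}$ for neat $T_2$ pd-examples (\cite[Theorem~4.9]{JuhaszSoukupSzentmiklossy16}, already used earlier), and the quantitative content of the proof of Theorem~\ref{maintheorem}, that the strict inequality $\pd(Y)<\density(Y)$ localises a witness $\mu\in\mathbf{S}$ with $\pd(Y)<\mu\le\density(Y)$.

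For the first assertion I note that $\kappa^{+\omega}$ is strong limit: taking $\lambda=\kappa$ gives $2^\kappa<\kappa^{+\omega}$, so by monotonicity $2^\theta<\kappa^{+\omega}$ for all $\theta\le\kappa$, while for $\theta\in[\kappa,\kappa^{+\omega})$ the hypothesis gives $2^\theta<\theta^{+\omega}=\kappa^{+\omega}$. Consequently $\mathbf{S}$ has no member above $\kappa$: a member $\ge\kappa^{+\omega}$ would be a strong-limit singular, and one in $(\kappa,\kappa^{+\omega})$ would be a regular successor, neither of which lies in $\mathbf{S}$. Hence the witness satisfies $\mu\le\kappa$, so $\pd(Y)<\mu\le\kappa$, i.e. $\pd(Y)<\kappa$. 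Since $\kappa^{+\omega}$ is strong limit and $\pd(Y)<\kappa<\kappa^{+\omega}$, we obtain $\Delta(X)\le\Delta(Y)\le 2^{\pd(Y)}<\kappa^{+\omega}$, as required.

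For the second assertion I keep $Y$ neat, $T_3$ and a pd-example, now with the additional hypothesis that \GCH\ holds on $[\kappa,\kappa^{+\omega})$; from the first part I have $\pd(Y)<\kappa$ and $\Delta(Y)<\kappa^{+\omega}$. It remains to exclude the values $\Delta(Y)\in(\kappa,\kappa^{+\omega})$. As every cardinal in this interval is a regular successor $\kappa^{+n}$ ($1\le n<\omega$), it suffices to prove that for a neat $T_3$ pd-example $\Delta(Y)$ is \emph{singular} — concretely, that $\cf(\Delta(Y))\le\pd(Y)$ — since then $\cf(\Delta(Y))<\kappa$ rules out every regular value in $(\kappa,\kappa^{+\omega})$ and forces $\Delta(Y)\le\kappa$, whence $\Delta(X)\le\kappa$. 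The plan is to derive this cofinality bound by showing, via the shrinking property $V\con\CL{V}\con U$ of regular spaces (the device that also yields $\Delta\ge\omega_1$ for non-singleton connected $T_3$ spaces), that if $\cf(\Delta(Y))>\pd(Y)$ then $Y$ would carry a dense set of size at most $\pd(Y)$, forcing $\density(Y)\le\pd(Y)$ and contradicting that $Y$ is a pd-example; the assumed \GCH\ on $[\kappa,\kappa^{+\omega})$ is what lets this construction close off at the successor cardinals $\kappa^{+n}$.

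The neat reduction and the arithmetic of the first assertion are routine once one sees that \WGCH\ from $\kappa$ on makes $\kappa^{+\omega}$ strong limit. The main obstacle is the regular-space refinement behind the second assertion: establishing $\cf(\Delta(Y))\le\pd(Y)$ for a neat $T_3$ pd-example and, crucially, pinning down where full \GCH\ on $[\kappa,\kappa^{+\omega})$ — rather than mere \WGCH\ — is indispensable. Since for merely $T_2$ spaces $\Delta(Y)$ may well be a regular cardinal in $(\kappa,\kappa^{+\omega})$ (otherwise the first assertion would already yield $\Delta(X)\le\kappa$), the argument must genuinely use both regularity and \GCH; I expect the latter to enter precisely in guaranteeing that the recursive construction of the dense set closes up at each successor cardinal $\kappa^{+n}$, where \GCH\ gives $(\kappa^{+n})^{<\kappa^{+n}}=\kappa^{+n}$.
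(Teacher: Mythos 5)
Your reduction to a neat open subspace and your observation that \WGCH\ from $\kappa$ on makes $\kappa^{+\omega}$ a strong limit cardinal are both fine (the latter is implicit in the paper's argument too), but the two facts on which you then build the first assertion are not available in the form you use them. First, you quote \cite[Theorem 4.9]{JuhaszSoukupSzentmiklossy16} as giving $\Delta(Y)\le 2^{\pd(Y)}$ for neat \emph{$T_2$} pd-examples; that theorem is about \emph{$T_3$} pd-examples (this is exactly how the paper cites it, and its earlier use in Section 2 is for a 0-dimensional, hence $T_3$, space). No single-exponent bound of this kind is claimed for merely Hausdorff spaces --- indeed, if it were true for $T_2$, the second assertion $\Delta(X)\le\kappa$ would go through for $T_2$ spaces as well, which is precisely the distinction Theorem~\ref{Delta} is drawing. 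Second, the ``localisation'' you invoke --- a witness $\mu\in\mathbf{S}$ with $\pd(Y)<\mu\le\density(Y)$ --- is not among the quoted results of \cite{JuhaszSoukupSzentmiklossy16}; you attribute it to ``the quantitative content of the proof'' of Theorem~\ref{maintheorem}, but that proof (Proposition~\ref{neat} plus \cite[Theorem 3.1]{JuhaszSoukupSzentmiklossy16}) only produces a member of $\mathbf{S}$ below $|Y|$, not one inside the interval $(\pd(Y),\density(Y)]$, so your step $\pd(Y)<\kappa$ is unjustified. The paper's own proof of the first assertion needs neither ingredient: if $\Delta(X)\ge\kappa^{+\omega}$, pass to the neat open pd-example $Y$, write $|Y|=\Delta(Y)=\mu^{+n}$ with $\mu$ a limit cardinal; then $\mu\ge\kappa^{+\omega}$, hence $\mu$ is strong limit by \WGCH\ from $\kappa$ on, and \cite[Theorem 3.1]{JuhaszSoukupSzentmiklossy16} forces $\pd(Y)=\density(Y)$, a contradiction.

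The second assertion is where the proposal genuinely fails: everything rests on the claim that a neat $T_3$ pd-example satisfies $\cf(\Delta(Y))\le\pd(Y)$, and you never prove it --- you offer a ``plan'' via the shrinking property of regular spaces, you yourself call it ``the main obstacle'', and you only ``expect'' where \GCH\ enters. That is a conjecture, not a proof, and there is concrete reason for skepticism: the paper itself, in the proof of Theorem~\ref{maintheorem2}, can conclude that a neat pd-example has \emph{singular} cardinality only by invoking SSH, so a ZFC-style cofinality bound of the kind you want would be a new theorem, not a routine refinement. The paper's actual proof of the second assertion is short and purely arithmetic once two cited facts about regular spaces are in hand: by \cite[Theorem 4.8]{JuhaszSoukupSzentmiklossy16}, $\density(X)<2^{\pd(X)}$ for $T_3$ spaces, so if $\pd(X)\ge\kappa$ then $\pd(X)\in[\kappa,\kappa^{+\omega})$ (by the first part and neatness) and \GCH\ there gives $\density(X)<\pd(X)^+$, i.e.\ $\density(X)\le\pd(X)$, contradicting that $X$ is a pd-example; hence $\pd(X)<\kappa$, so $2^{\pd(X)}\le 2^{\kappa}=\kappa^+$, and then \cite[Theorem 4.9]{JuhaszSoukupSzentmiklossy16} ($\Delta(X)<2^{\pd(X)}$ for $T_3$ pd-examples) yields $\Delta(X)\le\kappa$. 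You would need either to prove your cofinality bound under the stated hypotheses or to argue via these two cardinality bounds as the paper does.
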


\begin{proof}
Assume, on the contrary, that $\Delta(X) \ge \kappa^{+\omega}$. We may also assume, without any loss of generality, that $X$ is neat because,
by Proposition \ref{neat}, we can replace $X$ with its open subspace $Y$ that is a neat pd-example and, of course, satisfies $\Delta(Y) \ge \Delta(X)$.

But then $|X| = \Delta(X) = \mu^{+n}$, where $\mu \ge \kappa^{+\omega}$ is a limit and hence a strong limit cardinal.
This, however, contradicts  \cite[Theorem 3.1]{JuhaszSoukupSzentmiklossy16} which says that in this case $\pd(X) = \density(X)$.

To see the second part and striving for a contradiction, assume that $X$ is a $T_3$ pd-example with $\Delta(X) > \kappa$.
Again we may also assume, without any loss of generality, that $X$ is neat, hence we have $$\pd(X) < \density(X) \le |X| = \Delta(X) < \kappa^{+\omega}.$$
By \cite[Theorem 4.8]{JuhaszSoukupSzentmiklossy16} $\density(X) < 2^{\pd(X)}$ holds for every $T_3$ space $X$,
hence our cardinal arithmetic assumptions imply $\pd(X) < \kappa$. But then we also have $2^\pd(X) \le 2^\kappa = \kappa^+$,
while \cite[Theorem 4.9]{JuhaszSoukupSzentmiklossy16} says that any $T_3$ pd-example $X$ also satisfies $\Delta(X) < 2^\pd(X)$.
Hence we obtain $\Delta(X) \le \kappa$, a contradiction.
\end{proof}

Since for every non-singleton connected $T_{3.5}$ space $X$ we have $\Delta(X) \ge \mathfrak{c}$, we immediately obtain the following.

\begin{corollary}\label{ctT3.5}
If $\mathfrak{c} = \kappa^+ > \aleph_\omega$, moreover \WGCH\ holds from $\kappa$ on and \GCH\ holds in the interval $[\kappa,\, \kappa^{+\omega})$,
then $\mathbf{S} \ne \emptyset$,
in fact $\aleph_\omega \in \mathbf{S}$, but there is no connected $T_{3.5}$ pd-example.
\end{corollary}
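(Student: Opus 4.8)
The plan is to verify the two conclusions separately, using Theorem \ref{Delta} as the main engine. First I would check that $\mathbf{S} \ne \emptyset$, and in fact that $\aleph_\omega \in \mathbf{S}$. By hypothesis $\mathfrak{c} = 2^{\aleph_0} = \kappa^+ > \aleph_\omega$, so $\kappa \ge \aleph_\omega$; since $\aleph_\omega \le \kappa < \kappa^{+\omega}$ and \GCH\ is assumed on the interval $[\kappa,\kappa^{+\omega})$, one has in particular that $2^{\aleph_0} = \kappa^+$ is strictly bigger than $\aleph_\omega$. Now $\aleph_\omega$ is singular (cofinality $\omega$), and it fails to be a strong limit precisely because $2^{\aleph_0} = \mathfrak{c} > \aleph_\omega$: already the power set of a countable set exceeds $\aleph_\omega$. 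Hence $\aleph_\omega \in \mathbf{S}$, which gives $\mathbf{S} \ne \emptyset$.

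The second, and genuinely substantive, conclusion is that there is no connected $T_{3.5}$ pd-example. Here I would argue by contradiction. Suppose $X$ is a connected $T_{3.5}$ pd-example; we may assume $X$ is not a singleton, since a singleton has equal pd and density. The paper records (in the remark following Theorem \ref{Delta}, and implicitly via the $T_3$ argument in Lemma \ref{maandageen}(2)) that every non-singleton connected $T_{3.5}$ space satisfies $\Delta(X) \ge \mathfrak{c}$. The point is that a $T_{3.5}$ space admits a nonconstant continuous real-valued function, and using connectedness together with regularity one shows that any nonempty open set must be large—at least of size $\mathfrak{c}$—because it cannot be covered by countably many smaller pieces without disconnecting the space or violating complete regularity. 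Thus $\Delta(X) \ge \mathfrak{c} = \kappa^+ > \kappa$.

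On the other hand, the hypotheses of Corollary \ref{ctT3.5} are exactly tailored to the second part of Theorem \ref{Delta}: \WGCH\ holds from $\kappa$ on, and \GCH\ holds on $[\kappa,\kappa^{+\omega})$. Since $X$ is in particular a $T_3$ pd-example, Theorem \ref{Delta} yields $\Delta(X) \le \kappa$. This directly contradicts $\Delta(X) \ge \kappa^+ > \kappa$ obtained in the previous step, completing the proof.

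I expect the main obstacle to be the inequality $\Delta(X) \ge \mathfrak{c}$ for non-singleton connected $T_{3.5}$ spaces. Everything else is a matter of matching up the cardinal-arithmetic hypotheses with the statement of Theorem \ref{Delta}. For the hard inequality I would lean on the observation already extracted in Lemma \ref{maandageen}(2) that a connected non-singleton $T_3$ space has $\Delta \ge \omega_1$, and then strengthen the argument using complete regularity: a nonempty open set $V$ in a connected $T_{3.5}$ space carries the restriction of a continuous map into $[0,1]$ whose image, by connectedness, is a nondegenerate interval, and regularity propagates this to force $|V| \ge \mathfrak{c}$. Since this last fact is invoked as already known in the sentence immediately preceding the corollary, I would simply cite it rather than reprove it, and the proof reduces to the clean contradiction $\kappa^+ \le \Delta(X) \le \kappa$.
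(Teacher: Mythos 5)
Your proposal is correct and takes essentially the same approach as the paper: the paper obtains the corollary immediately by combining the second part of Theorem~\ref{Delta} (giving $\Delta(X) \le \kappa$ for any $T_3$ pd-example) with the stated fact that every non-singleton connected $T_{3.5}$ space has $\Delta(X) \ge \mathfrak{c} = \kappa^+$, which is exactly the contradiction you describe. Your verification that $\aleph_\omega \in \mathbf{S}$ (singular, and not strong limit since $2^{\aleph_0} = \mathfrak{c} > \aleph_\omega$) likewise matches the intended reading.
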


Let us remark that the assumptions of Corollary \ref{ctT3.5} are satisfied in a generic extension obtained by
adding $\kappa^+$ Cohen reals to a model of \GCH\ for any $\kappa \ge \aleph_\omega$.

\smallskip

If $X$ is any $T_{3.5}$ space with $|X| = \Delta(X) \ge \big|[0,1] \big| = \mathfrak{c}$, then clearly the ordinary cone
$Z = Z(X,[0,1],1)$ is a connected $T_{3.5}$ space that is neat and, by Lemma \ref{vrijdageen} satisfies both $\pd(Z) = \pd(X)$
and $\density(Z) = \density(X)$. Consequently, then $Y = \lambda_f(Z)$ is a connected and locally connected $T_{3.5}$ pd-example
if $X$ is a pd-example. Now, the existence of such an $X$ follows from $\mathbf{S} \setminus \mathfrak{c} \ne \emptyset$,
however we shall see later that this same assumption gives us much stronger $T_{3.5}$ pd-examples.

\bigskip

\section{Proof of Theorem~\ref{maintheorem1} part (II): topological group pd-examples}\label{functortwee}

\bigskip

The idea of the proof of part (II) of Theorem~\ref{maintheorem1} is very simple: We show that
if $X$ is a neat $T_{3.5}$ pd-example then $A(X)$, the free abelian topological group on $X$
is a pd-example as well. Now let us see the details.

\par\medskip
\noindent {\bf{ Free topological groups:}}
If $X$ is a $T_{3.5}$ space, then $F(X)$ and $A(X)$ denote the free topological group and the free abelian topological group on $X$. That is, $F(X)$ is a topological group containing (a homeomorphic copy of) $X$ such that
\begin{enumerate}
\item $X$ generates $F(X)$ algebraically,
\item every continuous function $f\from X\to H$, where $H$ is any topological group, can be extended to a continuous homomorphism $\bar f\from F(X)\to H$.
\end{enumerate}
Similarly for $A(X)$. The existence of these groups was proved by Markov~\cite{Markov41}. See \arhang\ and Tkachenko~\cite[Chapter 7]{ArhangTkachenko08} for details and references. It is known that
\begin{enumerate}
\item[\rm{(FG1)}] (\cite[Theorem 7.1.13]{ArhangTkachenko08}) $X$ is closed in $F(X)$ as well as $A(X)$,
\item[\rm{(FG2)}] (\cite[Theorem 7.1.5]{ArhangTkachenko08}) $F(X)$ and $A(X)$ are $T_{3.5}$ (being $T_2$ topological groups),
\item[\rm{(FG3)}] (\cite[Theorem 7.1.13]{ArhangTkachenko08}) $F(X)$ as well as $A(X)$ can be represented as a countable union of subspaces each of which is a continuous image of some finite power of $X$.
\end{enumerate}

The first part of the following crucial result is probably well-known.

\begin{proposition}\label{eerstezondag}
Let $X$ be infinite and $T_{3.5}$. Then $\density(X) = \density(F(X))=\density(A(X))$ and if $X$ is neat, then $\pd(X) = \pd(F(X))=\pd(A(X))$.
\end{proposition}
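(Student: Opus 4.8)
The plan is to prove $\density(X) = \density(F(X)) = \density(A(X))$ and, under neatness, $\pd(X) = \pd(F(X)) = \pd(A(X))$ by imitating the structure of the $\lambda_f$-proof (Lemma~\ref{pdvanlambda}), since (FG1), (FG2), (FG3) are exact analogues of (V1), (V2), (V4). I expect the argument for $F(X)$ and $A(X)$ to be essentially identical, so I would treat $G(X)$ as standing for either one.

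\textbf{The density equalities.} For the density statement, note first that $X \con G(X)$ gives $\density(X) \le \density(G(X))$ since $X$ is closed in $G(X)$ by (FG1) — actually we only need $\density$ monotone under passing to a dense-in-itself situation, but more directly: any dense $D \con G(X)$ must have dense trace on the closed copy of $X$ is \emph{not} automatic, so instead I would argue $\density(G(X)) \le \density(X)$ by taking a dense $D \con X$ with $|D| = \density(X)$ and observing that the subgroup (abelian subgroup) algebraically generated by $D$ is dense in $G(X)$ and has cardinality $\le \max\{|D|, \omega\} = \density(X)$ (since $X$ is infinite). For the reverse inequality I would use (FG3): $G(X)$ is a countable union of continuous images of finite powers $X^n$, and $\density(X^n) = \density(X)$ for infinite $X$, so $\density(G(X)) \le \omega \cdot \density(X) = \density(X)$. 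Combined with $\density(X) \le \density(G(X))$ (which follows because $X$ embeds as a \emph{closed} — in particular, as a subspace whose density is at most that of the ambient space) we get equality. The clean way to get $\density(X)\le \density(G(X))$ is: a subspace of a space can have larger density only if it is not, e.g., separable-reflecting; but in fact for the closed subspace $X$ one has $\density(X)\le\density(G(X))$ directly since the closure in $G(X)$ of a dense subset of $G(X)$ meets $X$ densely once we note $X$ is closed — I would phrase this carefully, or simply cite (V5)'s analogue if available.

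\textbf{The pinning down inequalities.} For $\pd(X) \le \pd(G(X))$ I would mimic Lemma~\ref{pdvanlambda} exactly. Given a neighborhood assignment $U \from X \to \tau(X)$, I extend it to a neighborhood assignment $W$ on $G(X)$ by setting $W(x)$ to be an open set of $G(X)$ tracing $U(x)$ on $X$ for $x \in X$, and $W(g) = G(X)\setminus X$ for $g \notin X$ (legitimate since $X$ is closed by (FG1)). A pinning set $A$ of size $\pd(G(X))$ for $W$ meets each $W(x)$, hence meets $U(x)$ after projecting: here the subtlety is that $A$ lives in $G(X)$, not in $X$, so I need to convert it to a subset $B \con X$ of the same cardinality that still meets every $U(x)$. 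This is where the structure of $G(X)$ enters: each $g \in G(X)$ is a finite word in elements of $X$ (and their inverses), so I let $B$ be the union of the (finite) supports of the elements of $A$, giving $|B| \le \omega \cdot |A| = |A| = \pd(G(X))$. I then need that if $a \in A$ meets $W(x)$, its support touches $U(x)$ — this requires choosing the tracing sets $W(x)$ carefully so that membership of a word in $W(x)$ forces one of its letters into $U(x)$; the canonical open sets in the free topological group built from $U(x)$ via the length/support structure do exactly this.

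\textbf{The reverse inequality and the main obstacle.} For $\pd(G(X)) \le \pd(X)$ under the neatness assumption, I would invoke (FG3) together with Corollary~\ref{X^n} ($\pd(X^n) = \pd(X)$ for neat $T_2$ spaces) and Lemma~\ref{eerstelemmaA} (subadditivity of $\pd$ over covers): writing $G(X) = \bigcup_{n<\omega} G_n$ with each $G_n$ a continuous image of some $X^{k_n}$, we get $\pd(G_n) \le \pd(X^{k_n}) = \pd(X)$, whence $\pd(G(X)) \le \sum_n \pd(G_n) \le \omega \cdot \pd(X) = \pd(X)$ (using $\pd(X) \ge \omega$, which holds since $X$ is infinite neat $T_2$). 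I expect the \textbf{main obstacle} to be the $\pd(X) \le \pd(G(X))$ direction, specifically engineering the neighborhood assignment $W$ on $G(X)$ so that a point of $A \cap W(x)$ genuinely yields a point of $B = \bigcup\{\text{supp}(a) : a \in A\}$ inside $U(x)$. In the $\lambda_f$ case the defining-set machinery made this transparent; for free groups I must use the canonical description of basic open neighborhoods in $F(X)$ or $A(X)$ (as in \cite[Chapter 7]{ArhangTkachenko08}) to ensure that the open sets $W(x)$ I choose ``see'' the letters of words, so that hitting $W(x)$ forces a support-letter into $U(x)$. All the other steps are routine applications of results already established in the excerpt.
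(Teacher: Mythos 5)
Your proposal gets the two ``soft'' directions right, and by the same route as the paper: $\density(G(X)) \le \density(X)$ and, for neat $X$, $\pd(G(X)) \le \pd(X)$ both follow from (FG3) together with Corollary~\ref{X^n} and Lemma~\ref{eerstelemmaA}. But both ``hard'' directions are left with genuine gaps, and both gaps are exactly where the paper deploys its one key idea, which your proposal never states: use the universal property of $F(X)$ (resp.\ $A(X)$) to turn Urysohn functions into continuous homomorphisms. For $\pd(X) \le \pd(G(X))$, you correctly isolate the obstacle --- you need an open $W(x) \ni x$ in $G(X)$ such that every word lying in $W(x)$ has a letter in $U(x)$ --- but your resolution, that ``the canonical open sets in the free topological group built from $U(x)$ via the length/support structure do exactly this,'' is an assertion, not an argument; the topology of free (even free abelian) topological groups admits no such simple letter-by-letter description, and making this precise is precisely the hard part. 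The paper's construction: since $X$ is $T_{3.5}$, pick continuous $f_x \from X \to [0,1]$ with $f_x(x)=1$ and $f_x \equiv 0$ on $X \setminus U(x)$, extend it to a continuous homomorphism $\bar f_x \from G(X) \to \reals$, and set $W(x) = \bar f_x^{-1}((0,2))$. If a reduced word $b = y_1^{r_1}\cdots y_n^{r_n}$ lies in $W(x)$, then $\bar f_x(b) = \sum_i r_i f_x(y_i) \ne 0$, so some $f_x(y_i) \ne 0$, forcing $y_i \in U(x)$; this is what makes your set $B$ of support letters work. Note this is also the only point where $T_{3.5}$ is used at all.

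The second gap is worse because it rests on a false principle: you justify $\density(X) \le \density(G(X))$ by claiming that a closed subspace has density at most that of the ambient space. That is false in general --- the $x$-axis of the Niemytzki plane is a closed discrete subspace of cardinality $\cont$ inside a separable space --- and (FG1) alone cannot give you this inequality. There is no analogue of (V5) to cite; the paper proves this direction, again with the homomorphism trick: given $D$ dense in $G(X)$, let $E$ be the set of letters occurring in the reduced forms of elements of $D$ (so $|E| \le \omega \cdot |D|$), and if $E$ were not dense in $X$, a continuous $f \from X \to \reals$ with $f(\CL{E}) \con \{0\}$ but $f(x)=1$ for some $x \notin \CL{E}$ would extend to a continuous homomorphism $\bar f \from G(X) \to \reals$ vanishing on the dense set $D$ (note $\bar f(e)=0$) yet not identically zero, a contradiction. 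So your overall architecture (supports of words plus (FG3)) matches the paper, but without the homomorphism-extension idea neither of the two substantive inequalities is actually proved.
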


\begin{proof}
We will only check this for $F(X)$, the proof for $A(X)$ is similar. That $\density(F(X))\le \density(X)$ is a direct consequence of (FG3). Now let $D$ be dense in $F(X)$. Every element of $D\setminus \{e\}$, where $e$ is the neutral element of $F(X)$, can be written uniquely in the form
$$
    d = x_1^{r_1}x_2^{r_2}\cdots x_n^{r_n}, \leqno{(\dagger)}
$$
where $n\ge 1$, $r_i\in\integers\setminus \{0\}$, $x_i\in X$, and $x_i\not= x_{i+1}$ for every $i=1,2,\dots,n{-}1$. Let $E$ be the set of all
points $x \in X$ that appear in the expressions $(\dagger)$. We claim that $E$ is dense in $X$. Indeed, assume that this is not true, and fix a continuous function $f\from X\to \reals$ such that $f(\CL{E})\con \{0\}$ and $f(x) = 1$, for some $x\in X\setminus \CL{E}$. We can extend $f$ to a continuous homomorphism $\bar f\from F(X)\to \reals$. Then $\bar f(D) \con \{0\}$, but $\bar f$ is not constant. This is clearly a contradiction. Hence $\density(X)\le \density(F(X))$.

Now assume that $X$ is neat. Then $\pd(X^n) = \pd(X)$ for every $n < \omega$ (Corollary~\ref{X^n}). Hence by (FG3) and Lemma~\ref{eerstelemmaA} we get $\pd(F(X)) \le \pd(X)$. For the converse inequality, let $U\from X\to \tau^+(X)$ be any neighborhood assignment on $X$. For every $x\in X$, let $f_x\from X\to [0,1]$ be continuous such that $f_x(x) = 1$ and $f_x(X\setminus U(X)) \con \{0\}$. Consider the continuous homomorphism $\bar f_x\from F(X)\to \reals$ that extends $f$. Put $V(x) = \bar f_x^{-1}((0,2))$. Then $V\from X\to \tau^+(F(X))$ is a neighborhood assignment, which can be extended to a full neighborhood assignment by simply putting $V(y) = F(X)$ for every $y \in F(X)\setminus X$. Now let $B$ be a subset of $F(X)$ of size at most $\pd(F(X) )$ meeting every set of the form $V(x)$, for $x\in X$. We can write every $b\in B\setminus\{e\}$ uniquely in the form
$$
    b = y_1^{r_1}y_2^{r_2}\cdots y_n^{r_n}, \leqno{(\ddagger)}
$$
where $n\ge 1$, $r_i\in\integers\setminus \{0\}$, $y_i\in X$, and $y_i\not= y_{i+1}$ for every $i=1,2,\dots,n{-}1$. Let $F$ be the set of all $y$'s that appear in the expressions $(\ddagger)$. Then, clearly, $|F|\le \pd(F(X))$. We claim that $F\cap U(x)\not=\emptyset$ for every $x\in X$. Indeed, striving for a contradiction, assume that $F\cap U(x)=\emptyset$ for some $x\in X$. Then $f_x(F)\con \{0\}$, and so $\bar f_x(B\setminus \{e\})\con \{0\}$. But this is a contradiction, since $B\setminus\{e\}$ meets $V(x) = \bar f_x^{-1}((0,2))$. So we conclude that $\pd(X)\le \pd(F(X))$, as desired.
\end{proof}

To prove part (II) of Theorem~\ref{maintheorem1}, we simply have to recall that, by  Theorem~\ref{maintheorem1} and Proposition \ref{neat}, if $\mathbf{S} \ne \emptyset$
then there is a neat 0-dimensional $T_2$, hence $T_{3.5}$ pd-example. Then we may apply Proposition \ref{eerstezondag} to obtain from $X$
the $T_2$ abelian topological group pd-example $A(X)$.

\bigskip

\section{Proof of Theorem~\ref{maintheorem2}: Connected topological group pd-examples}

\bigskip

We have seen in the previous section that if $X$ is a neat $T_{3.5}$ pd-example then the free (abelian) topological group over $X$
is a pd-example as well. In fact, what we could show was that this construction preserves the values of both $\pd(X)$ and $\density(X)$.
To obtain a proof of Theorem~\ref{maintheorem2}, we compose this construction with a procedure due to Hartman and Mycielski that, in turn
embeds any (abelian) $T_2$ topological group $G$ in a larger (abelian) $T_2$ topological group $G^\bullet$ that is pathwise connected and locally pathwise connected.
Under certain conditions this procedure also preserves the values of both $\pd(G)$ and $\density(G)$ and, for an appropriate neat $T_{3.5}$ pd-example $X$,
then $A(X)^\bullet$ is a pathwise connected and locally pathwise connected abelian $T_2$ topological group pd-example.

\medskip

We now describe the Hartman-Mycielski construction, following its presentation in~Ar-hangel'skii and Tkachenko~\cite[3.8.1]{ArhangTkachenko08},
and then prove a few new results about it that will be needed.
All topological groups we consider are assumed to be $T_2$.

Let $G$ be a topological group with neutral element $e$ and with group operation written multiplicatively. $G^\bullet$ is defined to be the set
of all step functions $f : J=[0,1) \to G$ such that, for some sequence $0= a_0 < a_1 < \cdots < a_n = 1$, the function $f$ is constant on $[a_k,a_{k+1})$ for every $k = 0, \dots, n{-}1$.
Define a binary operation $*$ on $G^\bullet$ by $(f*g)(x) = f(x)\cdot g(x)$ for all $f,g\in G^\bullet$ and $x\in G$.
Then every $f\in G^\bullet$ has a unique inverse in $G^\bullet$, defined by $f^{-1}(x) = (f(x))^{-1}$.
Then $(G^\bullet, *)$ is a group with identity $e^\bullet$, where $e^\bullet (r) = e$ for each $r\in J$.
It is also easy to see that $G$ can be identified with a subgroup of $G^\bullet$ via $x\mapsto x^\bullet$, where $x^\bullet(r) = x$ for every $r\in J$.

Let $V$ be a neighborhood of $e$ in $G$, and for every $\varepsilon > 0$, put
$$
    O(V,\varepsilon) = \{f\in G^\bullet : \mu(\{r\in J : f(r) \not\in V\})\} < \varepsilon,
$$
here $\mu$ denotes Lebesgue measure.
The $O(V,\varepsilon)$ are the neighborhoods of the neutral element $e^\bullet$ of $G^\bullet$ that generate its group topology. The following facts are known:

\begin{enumerate}
\item[(HM1)] (\cite[3.8.2]{ArhangTkachenko08}) $G^\bullet$ is a topological group and is pathwise connected and locally pathwise connected.
\item[(HM2)] (\cite[3.8.3]{ArhangTkachenko08}) The function $i_G\from G \to G^\bullet$ defined by $i_G(x) = x^\bullet$ is a topological isomorphism of $G$ onto a closed subgroup of $G^\bullet$.
\item[(HM3)] (\cite[3.8.8(e)]{ArhangTkachenko08}) $\density(G^\bullet) \le \density(G)$.
\end{enumerate}

We next prove that that in (HM3) one actually has equality.

\begin{lemma}\label{eerstelemma}
$\density(G) \le \density(G^\bullet)$.
\end{lemma}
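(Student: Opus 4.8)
The plan is to invert the inclusion recorded in (HM3) by manufacturing a dense subset of $G$ out of a dense subset of $G^\bullet$. First I would fix a dense set $D \con G^\bullet$ with $|D| = \density(G^\bullet)$ and set
\[
    E = \bigcup_{f \in D} \range(f),
\]
the collection of all values assumed by the step functions in $D$. Since every $f \in G^\bullet$ is a step function, $\range(f)$ is finite, so $|E| \le \omega\cdot|D|$. As the asserted inequality is trivial when $G$ is finite, I may assume $G$ is infinite; then $G^\bullet$ is an infinite $T_2$ space, whence $\density(G^\bullet)\ge\omega$ and $|E| \le \density(G^\bullet)$. It therefore suffices to prove that $E$ is dense in $G$, for then $\density(G) \le |E| \le \density(G^\bullet)$.

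To verify the density of $E$, I would start from an arbitrary nonempty open set $W \con G$, pick a point $g \in W$, and use continuity of the group operations to choose a neighborhood $V$ of $e$ with $gV \con W$. The idea is to test the density of $D$ against a suitable basic neighborhood of the constant function $g^\bullet$. Fixing any $\varepsilon$ with $0 < \varepsilon < 1$ and considering the neighborhood $g^\bullet * O(V,\varepsilon)$ of $g^\bullet$, one unwinds the definitions: $f \in g^\bullet * O(V,\varepsilon)$ means $(g^\bullet)^{-1} * f \in O(V,\varepsilon)$, that is,
\[
    \mu\big(\{r \in J : g^{-1}\cdot f(r) \notin V\}\big) < \varepsilon,
\]
which, since $g^{-1}\cdot f(r) \notin V$ is equivalent to $f(r) \notin gV$, is exactly the condition $\mu(\{r \in J : f(r) \notin gV\}) < \varepsilon$.

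Now density of $D$ produces some $f \in D \cap \big(g^\bullet * O(V,\varepsilon)\big)$. Because $\varepsilon < 1 = \mu(J)$, the set $\{r \in J : f(r) \in gV\}$ has measure at least $1-\varepsilon > 0$ and is in particular nonempty, so there is an $r$ with $f(r) \in gV \con W$. As $f(r)$ is a value of $f \in D$, it belongs to $E$, whence $E \cap W \ne \emptyset$. This shows $E$ is dense and finishes the argument.

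The only genuinely delicate point is the translation step: one must correctly transport the measure condition defining $O(V,\varepsilon)$ through left multiplication by $g^\bullet$ in the possibly non-abelian group $G$, and then use the strict bound $\varepsilon < 1$ to ensure that the set of ``good'' parameters $r$ is nonempty rather than merely of small measure. Everything else is bookkeeping, the crucial structural feature being that members of $G^\bullet$ are step functions, so that passing from $D$ to its value set $E$ costs at most a factor of $\omega$ and hence cannot increase the density.
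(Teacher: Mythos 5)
Your proof is correct and is essentially the paper's own argument: both form $E=\bigcup_{f\in D}\range(f)$, exploit that step functions have finite range, and detect a value of some $f\in D$ inside a translate $x^\bullet * O(V,\varepsilon)$ using that the bad set has measure $<1$ (the paper fixes $\varepsilon=\tfrac12$ and phrases the density claim as a proof by contradiction, but these are cosmetic differences).
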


\begin{proof}
Let $D$ be dense in $G^\bullet$, and put $E= \bigcup_{f\in D} f(J)$. Since the range of every element of $D$ is finite, $|E|=|D|$. We claim that $E$ is dense in $G$. If not, let $V$ be an open neighborhood of $e$ and let $x\in G$ be such that $xV\cap E=\emptyset$. The nonempty open subset $x^\bullet * O(V,\frac{1}{2})$ of $G^\bullet$ meets $D$, say in the point $f$. Let $g\in O(V,\frac{1}{2})$ be such that $f = x^\bullet * g$. Clearly, $\range(g)\cap V\not=\emptyset$, say $g(t)\in V$. Then $f(t) = x^\bullet(t) g(t)=xg(t)\in xV\con G\setminus E$. This is a contradiction.
\end{proof}

\begin{corollary}\label{eerstecor}
$\density(G)=\density(G^\bullet)$.
\end{corollary}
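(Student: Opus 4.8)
The plan is entirely straightforward: the corollary is nothing more than the conjunction of two inequalities that are already in hand, so no new argument is required. First I would invoke (HM3), namely $\density(G^\bullet) \le \density(G)$, which is quoted from Arhangel'skii and Tkachenko. Then I would invoke Lemma~\ref{eerstelemma}, which supplies the reverse inequality $\density(G) \le \density(G^\bullet)$. Putting the two together immediately yields $\density(G) = \density(G^\bullet)$.

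There is no genuine obstacle here, since the substantive content lives entirely in Lemma~\ref{eerstelemma} (whose proof exhibits, from a dense $D \subseteq G^\bullet$, the dense set $E = \bigcup_{f \in D} f(J)$ in $G$ of the same cardinality) and in the cited fact (HM3). The only thing worth noting is that one should be slightly careful about the degenerate cases: if $G$ is finite then $\density(G)$ and $\density(G^\bullet)$ are both finite and equal to $1$ in the trivial sense, but since the intended applications take $G = A(X)$ for an infinite space $X$, all densities in sight are infinite cardinals and the two inequalities combine without any subtlety. Thus I would simply write that Corollary~\ref{eerstecor} follows at once from Lemma~\ref{eerstelemma} together with (HM3).
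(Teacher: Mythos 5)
Your proof is correct and coincides exactly with the paper's: the corollary is stated immediately after Lemma~\ref{eerstelemma} precisely as the conjunction of that lemma with the cited fact (HM3), with no separate argument given. The remark about degenerate cases is harmless but unnecessary, since the paper's standing convention is that all topological groups under consideration arise from infinite spaces.
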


To obtain conditions under which we also have $\pd(G)=\pd(G^\bullet)$ we first make a little detour.

\begin{lemma}\label{tweedelemma}
If the infinite $T_{3.5}$ space $X$ is neat then so are both $A(X)$ and $F(X)$.
\end{lemma}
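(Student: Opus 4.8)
The plan is to exploit the homogeneity of topological groups together with a single continuous injection from $X$, thereby avoiding any direct analysis of the (rather intricate) topology of the free groups. I will treat $F(X)$ in detail; the argument for $A(X)$ is identical, with multiplicative notation replaced by additive.

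First I would record that since $X$ is infinite, the underlying group of $F(X)$ is the free group on the set $X$, whose cardinality is $|X|$; likewise $|A(X)| = |X|$. Hence, in view of the definition of neatness, it suffices to prove that every nonempty open subset $W$ of $F(X)$ satisfies $|W| = |X|$, the inequality $|W| \le |F(X)| = |X|$ being automatic. Because $F(X)$ is a topological group, every translation is a homeomorphism, so any nonempty open set can be moved by a translation to one containing the neutral element $e$ without changing its cardinality. Thus I may assume $e \in W$.

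The key step is then to fix a point $x_0 \in X$ and consider the map $\phi \from X \to F(X)$ defined by $\phi(x) = x \cdot x_0^{-1}$. Since $\phi$ is the composition of the (continuous) inclusion $X \hookrightarrow F(X)$ with right translation by $x_0^{-1}$, it is continuous; and it is injective because $X$ is embedded in $F(X)$ by (FG1). Moreover $\phi(x_0) = e \in W$, so $\phi^{-1}(W)$ is a nonempty open subset of $X$. As $X$ is neat, $|\phi^{-1}(W)| = |X|$, and injectivity of $\phi$ gives $|W| \ge |\phi(\phi^{-1}(W))| = |X|$. Combined with the reverse inequality this yields $|W| = |X| = |F(X)|$, so $F(X)$ is neat; the same computation with $\phi(x) = x \cdot x_0^{-1}$ read additively shows that $A(X)$ is neat as well.

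I do not anticipate a serious obstacle here: the only points requiring care are the cardinality count $|F(X)| = |A(X)| = |X|$, which is standard for infinite $X$, and the reduction, via homogeneity, to checking the neatness condition only at $e$. The genuine content is the observation that a continuous injection $X \to F(X)$ carrying $x_0$ to $e$ transports the neatness of $X$ directly onto neighborhoods of $e$, so that one never needs an explicit description of the basic neighborhoods in the free topological group.
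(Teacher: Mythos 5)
Your proof is correct and is essentially the paper's own argument: both reduce to a neighborhood of the identity via translation homogeneity and then pull it back to a nonempty open subset of $X$ by a translation (your $\phi^{-1}(W)=Wx_0\cap X$ is the paper's $xU\cap X$), applying neatness of $X$ and the count $|F(X)|=|A(X)|=|X|$. The only cosmetic difference is that you get this cardinality count from algebraic freeness, whereas the paper cites (FG3).
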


\begin{proof}
We prove this for $F(X)$, the proof for $A(X)$ being entirely similar. First observe that by (FG3) from section \ref{functortwee}, we have
$|F(X)| = |X|$. Let $U$ be any open subset of $F(X)$ containing the neutral element of $F(X)$.
Fix $x\in X$, and observe that $xU$ contains $x$, hence $xU\cap X$ is a nonempty open subset of $X$.
This gives us that $|U|\le |F(X)| = |X|$ on one hand and $|X| = |xU\cap X| \le |U|$ on the other, hence we are done.
\end{proof}

The following is our crucial result concerning the Hartman-Mycielski construction.

\begin{lemma}\label{derdelemma}
If $G$ is neat and $|G|\ge \cont$ then $G^\bullet$ is neat and $\pd(G^\bullet) = \pd(G)$.
\end{lemma}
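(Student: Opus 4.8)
The plan is to establish four things in turn: that $|G^\bullet|=|G|$, that $G^\bullet$ is neat, and then the two inequalities that together give $\pd(G^\bullet)=\pd(G)$.

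First the cardinality and neatness. A step function is coded by a finite set of breakpoints in $(0,1)$ together with its finitely many values in $G$; since there are only $\cont$ finite subsets of $(0,1)$ and $|G|^n=|G|$ for each finite $n$, we get $|G^\bullet|=\cont\cdot|G|=|G|$, using $|G|\ge\cont$. For neatness, $\triangle(G^\bullet)\le|G^\bullet|=|G|$ is trivial, and since $G^\bullet$ is a topological group it is homogeneous, so it suffices to bound a basic neighborhood $O(V,\varepsilon)$ of $e^\bullet$ from below. Fixing an interval $I\con J$ of length less than $\varepsilon$, the step functions equal to an arbitrary $g\in G$ on $I$ and to $e$ off $I$ all lie in $O(V,\varepsilon)$ and are pairwise distinct, so $|O(V,\varepsilon)|\ge|G|$. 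Hence $\triangle(G^\bullet)=|G|=|G^\bullet|$ and $G^\bullet$ is neat.

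Next, the easy inequality $\pd(G)\le\pd(G^\bullet)$. Using that $i_G$ embeds $G$ as a subgroup by (HM2), I take any neighborhood assignment $U$ on $G$, write $U(x)=xV_x$ with $V_x$ a neighborhood of $e$, and define a neighborhood assignment $W$ on $G^\bullet$ by $W(x^\bullet)=x^\bullet * O(V_x,\tfrac12)$ on the copy of $G$, extended by the whole space off $i_G(G)$. If $f\in W(x^\bullet)$ then $f(r)\in xV_x=U(x)$ off a set of measure $<\tfrac12$, so in particular $f(r)\in U(x)$ for some $r$. Given a set $B\con G^\bullet$ of size $\le\pd(G^\bullet)$ meeting every $W(x^\bullet)$, the range union $C=\bigcup_{f\in B}\range(f)$ has $|C|\le|B|$ (each range is finite) and meets every $U(x)$; thus $\pd(U)\le\pd(G^\bullet)$, whence $\pd(G)\le\pd(G^\bullet)$.

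The substantial direction is $\pd(G^\bullet)\le\pd(G)$, and this is where the \textbf{main obstacle} lies: a basic neighborhood $f*O(V_f,\varepsilon_f)$ constrains only the \emph{values} of $f$ up to a measure-$\varepsilon_f$ error, so exact breakpoints are irrelevant and one should be able to reduce to dyadic step functions living in fixed finite powers of $G$. Given an arbitrary neighborhood assignment, I shrink each $W(f)$ to a basic neighborhood $f*O(V_f,\varepsilon_f)$; pinning the smaller assignment down suffices. For $m,n\ge1$ let $B_{m,n}$ be the set of those $f$ with $\varepsilon_f\ge1/m$ that are constant on the blocks of a partition into at most $n$ intervals; the $B_{m,n}$ form a countable cover of $G^\bullet$. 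Put $k=k(m,n)$ with $2^{k}>mn$, so $n/2^{k}<1/m$, and let $D_k\con G^\bullet$ be the subspace of step functions constant on every dyadic block $[j/2^{k},(j+1)/2^{k})$, which is (a copy of) $G^{2^{k}}$. For $f\in B_{m,n}$ its dyadic rounding (taking on each dyadic block the value of $f$ at the block's left endpoint) differs from $f$ only on the at most $n$ blocks meeting a breakpoint, a set of measure $<1/m\le\varepsilon_f$; hence this rounding lies in $(f*O(V_f,\varepsilon_f))\cap D_k$, which is therefore nonempty and open in $D_k$.

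Finally I pin these traces down one finite power at a time. For fixed $m,n$ with $k=k(m,n)$, the family $\{\,W(f)\cap D_k : f\in B_{m,n}\,\}$ consists of nonempty open subsets of $D_k$ and has cardinality at most $|G^\bullet|=|G|=\triangle(D_k)$, since $D_k\cong G^{2^{k}}$ is neat with $\triangle(D_k)=|G|$. Lemma~\ref{nuldelemma} then yields a set $A_{m,n}\con D_k$ of size at most $\pd(D_k)=\pd(G^{2^{k}})=\pd(G)$ (the last equality by Corollary~\ref{X^n}) meeting every member of this family, hence meeting $W(f)$ for every $f\in B_{m,n}$. Taking $A=\bigcup_{m,n}A_{m,n}$ gives $|A|\le\omega\cdot\pd(G)=\pd(G)$ (here $\pd(G)$ is infinite in every application), and $A$ meets $W(f)$ for every $f\in G^\bullet$. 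This proves $\pd(G^\bullet)\le\pd(G)$, and together with the easy inequality and $\density(G^\bullet)=\density(G)$ from Corollary~\ref{eerstecor}, the lemma follows.
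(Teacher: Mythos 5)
Your proof is correct, but the crucial direction $\pd(G^\bullet)\le\pd(G)$ follows a genuinely different route from the paper's. The paper applies Lemma~\ref{nuldelemma} once, inside $G$ itself, to the single family $\mathcal{V}=\{xV_f : x\in G,\ f\in G^\bullet\}$ (of size $\le |G|=\triangle(G)$, which is exactly where $|G|\ge\cont$ enters), obtaining one set $D\con G$ with $|D|\le\pd(G)$; the pinning set for $G^\bullet$ is then the collection $S$ of all step functions with \emph{rational} breakpoints and values in $D$, and membership of a suitable $g\in S$ in $f*O(V_f,\varepsilon_f)$ is arranged by approximating the breakpoints of $f$ by nearby rationals and choosing the value on the $k$-th block from $D\cap f(a_k)V_f$. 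You instead stratify $G^\bullet$ into countably many pieces $B_{m,n}$, replace per-$f$ rational partitions by fixed dyadic grids, and pin each piece down inside the finite power $G^{2^k}$ via Lemma~\ref{nuldelemma} together with Corollary~\ref{X^n}; the same counting ($|G^\bullet|=|G|$, again from $|G|\ge\cont$) plays the role there that $|\mathcal{V}|\le\triangle(G)$ plays in the paper. Your dyadic-rounding argument is sound, and the countable union costs nothing since $\pd(G)\ge\omega$ (any infinite $T_2$ space has infinite pinning down number, by taking finitely many pairwise disjoint neighborhoods). The one step you assert without justification is that the subgroup $D_k$ of dyadic step functions, in its subspace topology, really is $G^{2^k}$ with the product topology; this is needed both for $\triangle(D_k)=|G|$ and for $\pd(D_k)=\pd(G)$. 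It is true and easy --- $O(V,\varepsilon)\cap D_k\supseteq V^{2^k}$ always, while $O(V,2^{-k})\cap D_k=V^{2^k}$, so the two group topologies share a neighborhood base at the identity --- but it should be said, since the paper's route never needs this fact. Your treatments of neatness, of $|G^\bullet|=|G|$, and of the easy inequality $\pd(G)\le\pd(G^\bullet)$ (via ranges of the elements of a pinning set) coincide with the paper's up to cosmetic differences. What your version buys is a conceptually clean reduction to Corollary~\ref{X^n}, i.e.\ to pinning down finite powers of $G$; what the paper's buys is economy: a single application of Lemma~\ref{nuldelemma} in $G$ and an explicit pinning set, with no need to analyze subspaces of $G^\bullet$.
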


\begin{proof}
That $G^\bullet$ is neat follows by the same argument as in the proof of Lemma~\ref{tweedelemma}.
The assumption $|G|\ge \cont$ ensures that we have $|G| = |G^\bullet|$.

Let $U\from G\to \tau(G)$ be a neighborhood assignment on $G$. For every $x\in G$ we have then a neighborhood $V_x$ of the neutral element $e$ of $G$ such that $xV_x \con U(x)$. Observe that $V\from G\to \tau(G^\bullet)$ defined by $V(x) = x^\bullet * O(V_x,\frac{1}{2})$ is a partial neighborhood assignment on $G^\bullet$ (note that we identify $x$ with $x^\bullet$).
Let $A$ be a subset of $G^\bullet$ of size $\pd(G^\bullet)$ such that $A\cap V(x) \not=\emptyset$ for every $x\in G$ and put $B=\bigcup_{f\in A} f(J)$.
Then $|B| \le |A|$ because  $A$ is infinite and $f(J)$ is finite for all $f \in A$.
Now take an arbitrary point $x\in G$ and let $f\in A$ be such that $f\in V(x) =  x^\bullet * O(V_x,\frac{1}{2})$.
There is $g\in O(V_x,\frac{1}{2})$ such that $f = x^\bullet * g$. Clearly, then $\range(g) \cap V_x\not= \emptyset$, say $g(t)\in V_x$.
But then $f(t) = x(t) g(t) \in x V_x\con U(x)$, hence $B$ meets $U(x)$. This proves that $\pd(G) \le \pd(G^\bullet)$.
Note that this part only used that $G$ is infinite.
In the proof of the reverse inequality, however, the assumption $|G|\ge \cont$ that ensures $|G| = |G^\bullet|$ will play an essential role.

Let $U\from G^\bullet\to \tau(G^\bullet)$ be a neighborhood assignment on $G^\bullet$. For every $f\in G^\bullet$ we may then
take a neighborhood $V_f$ of $e$ in $G$ and an $\varepsilon_f > 0$ such that $f * O(V_f,\varepsilon_f) \con U(f)$. Consider the collection
$$
    \mathcal{V}=\{x V_f : f\in G^\bullet, x\in G\}
$$
of open subsets of $X$ which, using $|G|\ge \cont$, has size at most $|G|$. Since $G$ is neat, $\mathcal{V}$
can be pinned down by an infinite set $D \subset G$ of size at most $\pd(G)$. Now let $S$ be the set of all $g$ in $G^\bullet$
for which there exist for some $n$ rational numbers $0=b_0 < b_1 < \cdots <b_{n-1} < b_n = 1$ and elements $d_0, \dots, d_{n-1}\in D$ such that
$g$ takes the constant value $d_k$ on $[b_k,b_{k+1})$ for every $k = 0, \dots, n{-}1$. Clearly, we have then $|S| =|D|$.

For any fixed $f\in G^\bullet$ there exist numbers $0= a_0 < a_1 < ... < a_n = 1$ such that
the function $f$ is constant $x_k$ on $[a_k, a_{k+1})$ for each $0\le k < n$. We may then choose rational numbers $b_1, \dots, b_{n-1}$
such that $a_k \le b_k < a_{k+1}$ for each $1\le k < n$ and $\sum_{k=1}^{n-1}(b_k - a_k) < \varepsilon_f$. Put $b_0=0$ and $b_n = 1$.
For every $0\le k < n$, we can choose a point $y_k \in D \cap  f(a_k) V_f$, and then define an
element $g \in S$ by letting $g(r) = y_k$ for each $r \in [b_k, b_{k+1} )$, $0\le k < n$. We claim that then $g \in f * O(V_f,\varepsilon_f)$.
To see this, it suffices to prove that the function $h\from J\to G$ defined by $h(t) = f(t)^{-1} g(t)$ belongs to $O(V_f,\varepsilon_f)$, but this is clear from our construction.
Thus we have shown that $S$ meets $f * O(V_f,\varepsilon_f) \subset U(f)$ for all $f\in G^\bullet$, proving that $\pd(G^\bullet) \le \pd(G)$.
\end{proof}

\begin{corollary}
Let $X$ be a neat $T_{3.5}$ space such that $|X|\ge \cont$. Then $X$ admits a closed embedding into a $T_{3.5}$ topological group $G$ such that
\begin{enumerate}
\item $\density(X) = \density(G)$,
\item $\pd(X) = \pd(G)$,
\item $G$ is neat,
\item $G$ is pathwise connected and locally pathwise connected.
\end{enumerate}

In particular, if $X$ is a neat $T_{3.5}$ pd-example of size $\ge \mathfrak{c}$
then $A(X)^\bullet$ is a pathwise connected and locally pathwise connected
abelian topological group pd-example.
\end{corollary}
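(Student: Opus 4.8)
The plan is to set $G = A(X)^\bullet$, the Hartman--Mycielski hull of the free abelian topological group on $X$, and to verify the four listed properties by chaining together the results established above. First I would form $A(X)$: by (FG2) this is a $T_{3.5}$ (and abelian) topological group, by (FG1) $X$ sits in it as a closed subspace, and by (FG3) one has $|A(X)| = |X|$. Since $X$ is an infinite neat $T_{3.5}$ space, Lemma~\ref{tweedelemma} shows that $A(X)$ is neat, while Proposition~\ref{eerstezondag} gives both $\density(A(X)) = \density(X)$ and $\pd(A(X)) = \pd(X)$.

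Next I would pass to $G = A(X)^\bullet$. The key point is that the hypothesis $|X| \ge \cont$ transfers to $A(X)$: since $|A(X)| = |X| \ge \cont$ and $A(X)$ is neat, Lemma~\ref{derdelemma} (the crucial lemma on the Hartman--Mycielski construction) applies and yields that $G$ is neat with $\pd(G) = \pd(A(X))$. Together with Corollary~\ref{eerstecor}, which gives $\density(G) = \density(A(X))$, the two chains
\[
    \pd(X) = \pd(A(X)) = \pd(G) \quad\text{and}\quad \density(X) = \density(A(X)) = \density(G)
\]
establish (1) and (2), neatness of $G$ is (3), and (4) is immediate from (HM1). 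This is where the argument does all its real work; the cardinality bookkeeping that secures $|A(X)| \ge \cont$, and hence licenses Lemma~\ref{derdelemma}, is the only point that needs any care.

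The closed embedding I would obtain by composing two closed inclusions: $X$ is closed in $A(X)$ by (FG1), and by (HM2) the canonical map $i_{A(X)}\from A(X) \to G$ is a topological isomorphism onto a closed subgroup of $G$. As a closed subset of a closed subspace is closed in the ambient space, the image of $X$ is closed in $G$, and $i_{A(X)}$ restricts to a homeomorphism on $X$; thus $X$ embeds as a closed subspace. Finally, $G$ is a $T_2$ topological group and hence $T_{3.5}$. For the ``in particular'' statement, if $X$ is moreover a pd-example with $|X| \ge \cont$, then $\pd(G) = \pd(X) < \density(X) = \density(G)$, so $G = A(X)^\bullet$ is a pd-example; it is abelian because $A(X)$ is and the operation of $G^\bullet$ is defined pointwise, and it is pathwise connected and locally pathwise connected by (4).
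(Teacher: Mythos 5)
Your proof is correct and follows exactly the route the paper intends (the corollary is stated there without proof precisely because it is this chain): $X \hookrightarrow A(X) \hookrightarrow A(X)^\bullet$ with closedness from (FG1) and (HM2), neatness of $A(X)$ from Lemma~\ref{tweedelemma}, preservation of $\pd$ and $\density$ from Proposition~\ref{eerstezondag}, Lemma~\ref{derdelemma} (licensed by $|A(X)|=|X|\ge\cont$), and Corollary~\ref{eerstecor}, with connectivity from (HM1). Nothing is missing; your explicit attention to the cardinality transfer $|A(X)|=|X|$ via (FG3) is exactly the point the paper relies on implicitly.
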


But \cite[Theorem 3.3]{JuhaszSoukupSzentmiklossy16} implies that if there is $\mu \in \mathbf{S}$
with $\mu \ge \mathfrak{c}$ then there is a neat 0-dimensional $T_2$, hence $T_{3.5}$ pd-example of size $\ge \mathfrak{c}$,
completing the proof of the first sentence in Theorem~\ref{maintheorem2}.

To prove the second sentence, assume that $X$ is a connected $T_{3.5}$  pd-example and recall that then
$\Delta(X) \ge \mathfrak{c}$. But then by Proposition \ref{neat} there is a neat open subspace $Y \con X$
that is also a pd-example. We do not know if $Y$ is connected but we do know that $$|Y| = \Delta(Y) \ge \Delta(X) \ge \mathfrak{c}.$$

But \cite[Theorem 3.5]{JuhaszSoukupSzentmiklossy16} and the remark made after it imply that if SSH holds
then $|Y|$ must be singular, while \cite[Theorem 3.1]{JuhaszSoukupSzentmiklossy16} implies that $|Y|$ cannot be strong limit, hence $|Y| \in \mathbf{S}$. Thus we have completed the proof of the second and third
sentences in Theorem~\ref{maintheorem2}.

Let us conclude this section with a couple of remarks. First, note that in Corollary \ref{ctT3.5} we formulated
some simple cardinal arithmetic conditions that imply $\mathbf{S} \ne \emptyset$ but $\mu < \mathfrak{c}$
for all $\mu \in \mathbf{S}$. Secondly, we note that large cardinals, hence going beyond ZFC are needed
to refute SSH.

\bigskip

\section{Proof of Theorem~\ref{maintheorem3}}

\bigskip

For every infinite $T_{3.5}$ space $X$ one can define the free locally convex $\mathbb{R}$-vector space $L(X)$ on $X$.
This is a space with similar properties as the free groups that we considered in section \ref{functortwee}.
The space $L(X)$ contains $X$ as a closed subspace and at the same time $X$ forms an $\mathbb{R}$-vector space
basis for $L(X)$. Moreover, the following defining property holds: every continuous mapping $f$ from $X$ to a locally
convex $\mathbb{R}$-vector space $E$ can be extended to a continuous linear operator $\bar f\from L(X)\to E$. The existence and uniqueness of $L(X)$ was proved by Markov in~\cite{Markov41}.

We can treat $L(X)$ in almost the same way as we treated the free topological groups $F(X)$ and $A(X)$. There is one important difference however: the statement (FG3) should be replaced by the following.
\begin{enumerate}
\item[\rm{(FLC3)}] $L(X)$  can be represented as the countable union of subspaces each of which is a continuous image of some finite power of $X\times\reals$.
\end{enumerate}

So, to use Lemma~\ref{vrijdageen} to conclude $\pd(X) = \pd(X\times\reals)$ we need $|X| = \Delta(X) \ge \mathfrak{c}$.

\begin{proposition}\label{tweedezondagNIEUW}
Let $X$ be an infinite $T_{3.5}$ space. Then $\density(X) = \density(L(X))$ and if, in addition, $X$ is neat and $|X|\ge \cont$ then $\pd(X) = \pd(L(X))$.
\end{proposition}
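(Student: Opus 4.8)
The plan is to follow the proof of Proposition~\ref{eerstezondag} almost verbatim, the single new ingredient being that (FLC3) presents $L(X)$ as a countable union of continuous images of finite powers of $X\times\reals$ rather than of $X$ alone. Controlling the extra $\reals$-factors is exactly where the hypothesis $|X|\ge\cont$ enters, through Lemma~\ref{vrijdageen}.

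First I would settle the density equality, for which neither neatness nor $|X|\ge\cont$ is needed. The inequality $\density(L(X))\le\density(X)$ is immediate from (FLC3): each of the countably many pieces is a continuous image of some $(X\times\reals)^k\cong X^k\times\reals^k$, and since $X$ is infinite we have $\density(X^k\times\reals^k)\le\density(X)\cdot\omega=\density(X)$, while a countable union does not increase this. For the reverse inequality I would take $D$ dense in $L(X)$, write each nonzero $d\in D$ uniquely as an $\reals$-linear combination $d=\sum_{i=1}^{n}r_i x_i$ with $r_i\in\reals\setminus\{0\}$ and distinct $x_i\in X$, and let $E\con X$ collect all $x_i$ that occur. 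Then $|E|\le|D|$, and $E$ is dense in $X$: otherwise, using $T_{3.5}$, I could fix a continuous $f\from X\to\reals$ vanishing on $\CL{E}$ but with $f(x)=1$ for some $x\notin\CL{E}$, extend it to a continuous linear $\bar f\from L(X)\to\reals$, and observe that $\bar f(D)\con\{0\}$ while $\bar f$ is nonconstant, contradicting density of $D$. Hence $\density(X)\le|E|\le\density(L(X))$.

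Assume now $X$ is neat with $|X|\ge\cont$; I must show $\pd(X)=\pd(L(X))$. For $\pd(L(X))\le\pd(X)$ the key point is that every finite power $X^k$ is again neat, since $\Delta(X^k)=\Delta(X)^k=|X|^k=|X|=|X^k|$, and satisfies $\pd(X^k)=\pd(X)$ by Corollary~\ref{X^n}. Because $X$ is infinite $T_2$ we have $\pd(X)\ge\omega=\density(\reals^k)$, and $|\reals^k|=\cont\le|X|=\Delta(X^k)$; thus Lemma~\ref{vrijdageen} applied to the pair $X^k,\reals^k$ yields $\pd\big((X\times\reals)^k\big)=\pd(X^k\times\reals^k)=\pd(X^k)=\pd(X)$. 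Since $\pd$ cannot increase under continuous images, every piece in (FLC3) has pinning-down number at most $\pd(X)$, so Lemma~\ref{eerstelemmaA} gives $\pd(L(X))\le\omega\cdot\pd(X)=\pd(X)$. This is the step I expect to be the crux: it is precisely the need to absorb $\reals^k$ (of size $\cont$) into $X^k$ via Lemma~\ref{vrijdageen} that forces the hypothesis $|X|\ge\cont$, in contrast with the group case where (FG3) involved only powers of $X$.

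Finally, for $\pd(X)\le\pd(L(X))$ I would reproduce the functional-separation argument from Proposition~\ref{eerstezondag}. Given a neighborhood assignment $U\from X\to\tau^+(X)$, for each $x$ choose continuous $f_x\from X\to[0,1]$ with $f_x(x)=1$ and $f_x(X\setminus U(x))\con\{0\}$, extend to a continuous linear $\bar f_x\from L(X)\to\reals$, and set $V(x)=\bar f_x^{-1}((0,2))$, a neighborhood of $x$; extend $V$ over all of $L(X)$ by $V(y)=L(X)$ off $X$. Taking $B\con L(X)$ of size $\le\pd(L(X))$ meeting each $V(x)$, writing each nonzero element of $B$ as a linear combination, and letting $F\con X$ be the set of basis vectors that appear, I get $|F|\le\pd(L(X))$; moreover $F$ pins down $\{U(x):x\in X\}$, since $F\cap U(x)=\emptyset$ would force $f_x(F)\con\{0\}$, hence $\bar f_x(B)\con\{0\}$, contradicting that $B$ meets $V(x)=\bar f_x^{-1}((0,2))$. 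As the assignment was arbitrary, $\pd(X)\le\pd(L(X))$, completing the proof.
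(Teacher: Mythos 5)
Your proposal is correct and follows essentially the same route as the paper: the upper bound $\pd(L(X))\le\pd(X)$ via (FLC3), Lemma~\ref{eerstelemmaA}, Corollary~\ref{X^n} and Lemma~\ref{vrijdageen} (the latter being exactly where $|X|\ge\cont$ is needed to absorb the $\reals$ factors), and the lower bound via the linear-functional separation argument of Proposition~\ref{eerstezondag}. The only, immaterial, difference is order of operations: the paper first applies Lemma~\ref{vrijdageen} to get $\pd(X\times\reals)=\pd(X)$ and then Corollary~\ref{X^n} to the neat space $X\times\reals$, whereas you apply Corollary~\ref{X^n} to $X$ first and then Lemma~\ref{vrijdageen} to the pair $X^k,\reals^k$.
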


\begin{proof}
That $\density(L(X)) \le \density(X)$ is a direct consequence of (FLC3). The proof that $\density(X) \le \density(L(X))$ is completely analogous to the proof that $\density(X) \le \density(F(X))$ in Proposition~\ref{eerstezondag}. Now assume that $X$ is neat and that $|X| \ge \cont$. Then $\pd(X\times\reals ) = \pd(X)$ by Lemma~\ref{vrijdageen}. Since $X\times\reals$ is clearly neat, we have by Corollary~\ref{X^n} that $\pd((X\times\reals)^n)=\pd(X\times \reals) = \pd(X)$ for every $0 < n < \omega$. Hence again by (FLC3) we get $\pd(L(X)) \le \pd(X)$. That $\pd(X) \le \pd(L(X))$ follows exactly as in the proof of Proposition~\ref{eerstezondag} for $F(X)$.
\end{proof}

Hence if there is a $T_{3.5}$ pd-example $X$ such that $\pd(X) \ge \cont$ then $L(X)$ is a locally convex $\mathbb{R}$-vector space that is also a pd-example.
Such a space $X$ exists by the construction given in \cite[Theorem 3.3]{JuhaszSoukupSzentmiklossy16}, provided that there is a cardinal $\mu \in \mathbf{S}$
with $\mu \ge \mathfrak{c}$.
The proof of Theorem~\ref{maintheorem3} is thus completed.


\def\cprime{$'$}
\makeatletter \renewcommand{\@biblabel}[1]{\hfill[#1]}\makeatother

\end{document}